\documentclass[onefignum,onetabnum]{siamart171218}

\usepackage{lipsum}
\usepackage{amsfonts}
\usepackage{graphicx}
\usepackage{epstopdf}
\usepackage{algorithmic}
\ifpdf
  \DeclareGraphicsExtensions{.eps,.pdf,.png,.jpg}
\else
  \DeclareGraphicsExtensions{.eps}
\fi

\newsiamremark{remark}{Remark}
\newsiamremark{hypothesis}{Hypothesis}
\crefname{hypothesis}{Hypothesis}{Hypotheses}
\newsiamthm{claim}{Claim}

\headers{Energy-based transport for amortized bayesian inference
}{R. Baptista, H. Kaveh, And A. Stuart}
\hypersetup{
  pdftitle={Energy-based transport for amortized bayesian inference
  },
  pdfauthor={H. Kaveh, R. Baptista, And A. Stuart}
}
\title{
Energy-based transport for amortized bayesian inference\thanks{The authors are listed in alphabetical order.}
}
\author{Ricardo Baptista\thanks{Department of Statistical Sciences, University of Toronto 
  (\email{r.baptista@utoronto.ca}).}
  \and
Hojjat Kaveh\thanks{
Mechanical and Civil Engineering, California Institute of Technology
(\email{hkaveh@caltech.edu}). Corresponding author.
}
\and Andrew M. Stuart\thanks{Computing and Mathematical Sciences, California Institute of Technology}
(\email{astuart@caltech.edu})}

\usepackage{amsopn}

\makeatletter
\newcommand*{\addFileDependency}[1]{
  \typeout{(#1)}
  \@addtofilelist{#1}
  \IfFileExists{#1}{}{\typeout{No file #1.}}
}
\makeatother

\usepackage{macros}
\usepackage{graphicx}
\ifpdf
\hypersetup{
  pdftitle={Amortized Energy Distance-Based Operator Learning for Infinite-Dimensional Bayesian Inverse Problems},
  pdfauthor={R. Baptista, H. Kaveh, And A. Stuart}
}
\usepackage{xcolor}

\begin{document}

\maketitle
\begin{abstract} We consider amortized Bayesian inference for nonlinear inverse problems using only samples from the joint distribution of parameters and observations, including problems with unknown functions in a Banach space. Classical methods such as Markov chain Monte Carlo solve a new inference problem for each observation, making repeated posterior inference computationally prohibitive, particularly in infinite dimensions. Amortized Bayesian inversion instead learns a reusable map that rapidly generates posterior samples for new observations. We learn an observation-dependent transport map that pushes a reference measure to an approximate posterior. Training minimizes the average energy distance between the posterior and the learned pushforward. Averaging over observations allows generalization across observation instances and efficient amortized inference. Furthermore, the formulation is likelihood-free, requiring only samples from the joint distribution and avoiding likelihood evaluation. In addition, the use of an energy-distance objective removes the need for invertibility of the transport map and for computation of Jacobian determinants, enabling flexible parameterizations in high- and infinite-dimensional settings. Moreover, when the posterior has a density with respect to a Gaussian prior measure, we construct transport maps as the identity plus a learnable map valued in the prior’s Cameron--Martin space. This guarantees that the learned posterior remains absolutely continuous with respect to the prior. In infinite dimensions, the transport map is parameterized using neural operators, enabling use at different grid resolutions. We demonstrate the approach on a finite-dimensional problem and PDE-based porous-medium flow and seismic inverse problems. The learned transport captures multimodality and dominant posterior modes while enabling fast sampling.
\end{abstract}

\begin{keywords}
Bayesian inverse problems; Amortized inference; Likelihood-free inference; Transport maps; Infinite-dimensional inverse problems; Cameron–Martin space; Energy distance; Neural operators.
\end{keywords}

\begin{AMS}
65J22, 62F15, 68T07, 62G05
\end{AMS}
\setcounter{section}{0}

\section{Introduction}

Inverse problems arise in many areas of science and engineering 
\cite{stuart_inverse_2010,dashti_bayesian_2017,tarantola_inverse_2005}, 
where one seeks to infer unknown parameter \(u \in \mathcal{U}\) 
from indirect and noisy observation \(y \in \mathcal{Y}\). 
Examples include the estimation of unknown fields and model parameters in 
mechanics, medical imaging, and geophysics, where the available 
data are typically sparse, noisy, and indirectly related to the quantities of 
interest 
\cite{kaveh_induced_2023,mousavi_bayesian_2024,arridge_solving_2019,kaveh_data_2026,kaveh_bayesian_2026}. 
These challenges motivate probabilistic formulations of inverse problems that quantify the uncertainty 
in the inferred parameters rather than producing a single deterministic estimate.

A common mathematical formulation is based on the stochastic forward model:
\begin{equation}
\label{eq:GIP}
    y = G(u,\eta),
\end{equation}
where \(G:\mathcal{U} \times \mathcal{E}\rightarrow\mathcal{Y}\) denotes the forward operator, mapping parameter $u$ and noise \(\eta\) to the observation.
In the Bayesian framework, the unknown parameter \(u\) is 
modeled as a random variable with prior distribution \(\rho\) on the parameter space $\mathcal{U}$. The forward model 
and the noise then induce a joint distribution \(\gamma\) on \((u,y)\), 
with marginal distribution \(\kappa\) on the observation space $\mathcal{Y}$. For a realized 
observation \(y^\dagger\), uncertainty in \(u\) is described by the posterior 
distribution \(\pi(\cdot \mid y^\dagger)\). 
Given a likelihood function \(\ell(y \mid u)\) that is defined by the conditional distribution for the observation $y$ under the forward model~\eqref{eq:GIP}, the posterior is defined by
Bayes' rule as 
\[
    \pi(u \mid y^\dagger)
    =
    \frac{\ell(y^\dagger \mid u)\rho(u)}
    {\kappa(y^\dagger)},
\]
In infinite-dimensional settings, Bayes' rule is interpreted through the 
Radon--Nikodym derivative of the posterior with respect to the prior.

In many applications, posterior inference must be performed not for a single 
observation, but repeatedly for many different realizations of the data. This 
situation arises, for example, when analyzing many experimental configurations in experimental design~\cite{huan_optimal_2024} or many synthetic and observational data sets in 
uncertainty quantification, design, and real-time inference \cite{radev_bayesflow_2020,karumuri_learning_2024}. Classical sampling methods such as Markov chain Monte Carlo (MCMC) \cite{brooks2011handbook}, and infinite
dimensional variants \cite{cotter_mcmc_2013}, require the solution of a new inverse 
problem for each observation \(y^\dagger\). This repeated online cost can become computationally prohibitive. 

This motivates amortized Bayesian inference. The central idea is to replace 
repeated, observation-specific inference with a reusable model learned during an 
offline training stage, which may be applied with any observation. Once trained, 
this model can be evaluated rapidly for any observation \(y^\dagger\), producing approximate posterior samples without evaluation of the likelihood multiple times as would be
required to deploy MCMC. Thus, amortization is particularly attractive when inference must be repeated many times for different observations.

In this work, we pursue an amortized inference approach based on conditional transport 
maps. In particular, we seek an observation-dependent \emph{transport map} \(T_\theta(\cdot \,;y) \colon \mathcal{Z} \to \mathcal{U}\) that pushes forward a reference measure \(\muref\) on a latent space $\mathcal{Z}$ to an approximation of the 
posterior distribution. That is, we choose parameter $\theta$ such that, for $y$ drawn from 
the marginal $\kappa$,
\begin{equation*}
    T_\theta(\cdot \,;y)_\# \muref
    \approx
    \pi(\cdot \mid y).
\end{equation*}
If the map satisfies the push-forward condition exactly, then $T_\theta(z;y) \sim \pi(\cdot \mid y)$ for $z \sim \muref$. Therefore, posterior samples for a new observation 
\(y^\dagger\) can be obtained by drawing samples from the reference measure and evaluating the 
learned map \(T_\theta(\cdot \,;y^\dagger)\) on those samples.

Measure transport provides a natural framework for representing complicated 
probability distributions as pushforwards of simpler reference measures 
\cite{villani_optimal_2009,santambrogio_optimal_2015, marzouk_introduction_2016}. In Bayesian inference, 
transport maps have been used to approximate posterior distributions by 
constructing deterministic couplings between a tractable reference measure and 
the target posterior \cite{moselhy_bayesian_2012}. 
Many existing approaches are based on objectives involving the 
Kullback--Leibler divergence \cite{baptista2024representation, cao_lazydino_2026}. However, such formulations often require the map 
to be invertible and require evaluation of Jacobian determinants. These 
requirements are restrictive in high-dimensional problems and become especially 
problematic in infinite-dimensional settings. Alternative approaches seek transports by minimizing optimal transport distances~\cite{baptista2024conditional, baptista2025conditional, taghvaei2022optimal}, often requiring adversarial optimization, which is challenging to scale to high-dimensional settings.

We address the limitations of existing
transport-based methodologies for amortized inference 
by training the transport map using an averaged 
energy-distance objective between the true posterior and the learned pushforward 
distribution. Specifically, for an observation \(y\sim\kappa\), we approximate 
the posterior \(\pi(\cdot\mid y)\) by the pushforward distribution 
\(B_\theta(\cdot \,;y) :=T_\theta(\cdot \,;y)_\#\muref\), and minimize
the objective function
\begin{equation}
\label{eq:objective_e}
    \LossL(\theta)
    =
    \mathbb{E}^{y\sim\kappa}
    \left[
        \DE^2\bigl(\pi(\cdot\mid y), B_\theta(\cdot \,;y)\bigr)
    \right].
\end{equation}
Here \(\DE\) denotes the energy distance that is defined for 
$\mu, \nu \in \mathcal P(\mathcal U)$, two probability measures on \(\mathcal{U}\), by
\begin{equation}
\label{eq:ED_def}
\DE^2(\mu,\nu)
:=
2\mathbb{E}^{(u,v)\sim \mu\otimes\nu}\|u-v\|
-
\mathbb{E}^{(u,u')\sim \mu\otimes\mu}\|u-u'\|
-
\mathbb{E}^{(v,v')\sim \nu\otimes\nu}\|v-v'\|.
\end{equation}
For a  probability measure \(\nu \in \mathcal P(\mathcal U)\) and 
\(u \in \mathcal U\), the energy score \cite{bach_machine_2025} is defined as
\begin{equation}
\label{eq:ES_def}
\ES(\nu,u)
=
\mathbb{E}^{v\sim \nu}\|u-v\|
-
\frac12\,
\mathbb{E}^{(v,v')\sim \nu\otimes \nu}\|v-v'\|.
\end{equation}
We will show that the objective defined in Eq.~\eqref{eq:objective_e} is equivalent, up to terms independent of \(\theta\), to an averaged energy-score; this enables
purely sample-based evaluation of the desired objective function. Chapter 5 of \cite{bach_machine_2025} lays out a general methodology for amortization in the solution of inverse problems using strictly proper scoring rules. Our energy distance objective is a specific example of the scoring rule methodology.

The energy distance defines a discrepancy between probability measures and is 
particularly amenable to settings in which the measures are available only 
through samples. Moreover, it does not require density evaluation, 
invertibility of the transport map, or computation of Jacobian determinants. We show that the 
resulting objective can be written entirely in terms of expectations over the joint 
distribution \(\gamma\) and its marginals. Consequently, the method is 
likelihood-free: it only requires samples from the joint distribution of 
parameters and observations, rather than explicit access to the likelihood function or the 
posterior density. Our likelihood-free formulation is related to Sequential Neural Likelihood (SNL) methods, which also use simulator-generated samples to avoid explicit likelihood evaluation, but differs in that our method learns an observation-amortized posterior-generating transport map, whereas SNL learns a neural surrogate for the likelihood \cite{papamakarios_sequential_2019}. Flow-matching posterior estimation also trains an amortized posterior sampler from joint simulator samples \cite{wildberger2023flow}, but it represents the posterior through an ODE flow and learns the corresponding conditional velocity field. In contrast, our approach trains a direct pushforward map by minimizing an energy-distance discrepancy between posterior measures, without requiring invertibility, density evaluation, Jacobian determinants, or divergence computations.

When the parameter \(u\) is a function, this transport-map perspective must also 
respect the measure-theoretic structure of Bayesian inverse problems in function 
space. In typical Bayesian inverse problems, the posterior measure is absolutely 
continuous with respect to the prior \cite{stuart_inverse_2010};
in particular, the posterior and prior cannot be mutually singular. 
In infinite-dimensional problems, it is a delicate issue to avoid such mutual singularity.
For example, Gaussian measures are mutually singular unless the conditions of
the Feldman-H\/ajek theorem are met  \cite[Theorem 37]{dashti_bayesian_2017}. 
We address this issue in the context of inverse problems in Banach space, with
Gaussian priors. In particular, we construct transport maps that respect the
measure-theoretic structure of Gaussian priors on function spaces \cite{bogachev_gaussian_1998,bogachev_triangular_2005}. Motivated by
the Cameron--Martin theorem \cite[Theorem 32]{dashti_bayesian_2017}, 
we choose the reference measure to be the prior distribution on $\mathcal{U}$ and write the map $T_\theta(\cdot \,;y) \colon \mathcal{U} \rightarrow \mathcal{U}$ as an identity perturbation
\[
    T_\theta(u;y) = u + {H}_\theta(u;y),
\]
where the architecture is chosen so that the perturbation
\({H}_\theta(u;y)\) lies in the Cameron--Martin space associated with the
prior. In our work, the map \({H}_\theta\) is represented using a neural operator
\cite{li_fourier_2021}, with its output constrained through the covariance
structure of the Gaussian prior. This construction builds on the
measure-theoretic underpinnings of Gaussian measures in Banach space and is designed to ensure that the
learned posterior $B_\theta(\cdot\,; y)$ is absolutely 
continuous with respect to the prior measure $\rho$.

The main contributions of this paper are as follows:
\begin{itemize}
    \item[(C1)] We formulate an amortized approach to Bayesian inverse problems based 
    on minimizing an averaged energy-distance objective between the true posterior 
    and a learned pushforward distribution. This avoids the need for 
    invertibility of the transport map and the computation of 
    Jacobian determinants.

    \item[(C2)] We show that the objective can be expressed using only samples from the 
    joint distribution of parameters and observations and its marginals. The 
    resulting method is therefore likelihood-free and is applicable in 
    simulation-based inverse problems where the likelihood is unavailable or 
    difficult to evaluate.

    \item[(C3)] We design transport maps for infinite-dimensional Bayesian inverse 
    problems by parameterizing them as identity perturbations in the 
    Cameron--Martin space of the Gaussian prior. This construction preserves 
    the measure-theoretic structure of the posterior and is implemented using 
    neural operators.

    \item[(C4)] We demonstrate the method on a finite-dimensional nonlinear inverse 
    problem and on two PDE-constrained inverse problems: a Darcy flow inverse 
    problem and a wave-equation inverse problem. The numerical results show 
    that the learned transport maps capture posterior structure, including 
    multimodality and dominant KL modes, while enabling fast posterior sampling 
    for new observations.
\end{itemize}

The remainder of the paper is organized as follows. 
Section~\ref{sec:methodology} develops the methodology corresponding to 
Contributions (C1)--(C3). In particular, 
Subsection~\ref{ssec:objective} introduces the averaged energy-distance 
objective in (C1), derives its sample-based likelihood-free form in (C2), and 
relates the objective to the energy score. 
Subsection~\ref{ssec:transportmap} describes the transport-map architecture and 
the Cameron--Martin-informed parameterization used for infinite-dimensional 
Bayesian inverse problems, corresponding to (C3). 
Section~\ref{sec:numerics} addresses (C4) by presenting numerical experiments 
on a finite-dimensional nonlinear inverse problem, a Darcy flow inverse problem, 
and a wave-equation inverse problem. 
Section~\ref{sec:conclusion} summarizes the main findings, discusses the 
computational advantages of the proposed amortized approach, and outlines open 
directions. The paper ends with four appendices. Appendix~\ref{sec:preliminaries} reviews 
the Gaussian measure and Cameron--Martin space background needed for the 
construction used in (C3). Appendices~\ref{app:proof_objective_e} and 
\ref{app:proof_ES} contain proofs of technical lemmas underlying 
Subsection~\ref{ssec:objective}. Appendix~\ref{sec:numerical_details} discusses additional details of the numerical experiments, including the 
scaling of the methodology with respect to the amount of training data. 

\section{Methodology}
\label{sec:methodology}
This section introduces the proposed amortized transport methodology. 
Section~\ref{ssec:objective} defines the averaged energy-distance objective used to learn 
conditional pushforward approximations of the posterior distributions, derives a 
sample-based form involving only the joint distribution of parameters and 
observations, and relates the objective to the energy score. 
Section~\ref{ssec:transportmap} then describes the transport-map architecture;
in the setting of an unknown parameter in a Hilbert space, the architecture is
further constrained by means of a Cameron--Martin-informed construction that is
natural when Gaussian priors are adopted on the function space.

\subsection{Objective Function}
\label{ssec:objective}
Given the Bayesian inverse problem defined by the model in Eq.~\eqref{eq:GIP}, 
our goal is to approximate the posterior distribution $\pi(u \mid y)$. We
employ a parametric family of distributions $B_\theta(u\,; y)$ for the approximation, aiming to amortize over
$y \sim \kappa$ and choose $\theta$ to optimize this approximation. 
We represent the approximate posterior $B_\theta(u\,; y)$ as the pushforward of a reference measure $\muref$ under a transport map $T_\theta(\cdot \,; y)$. That is,
\[
B_\theta(u \,; y) = T_\theta(\cdot \,; y)_\# \muref(u),
\]
where the variable $\theta$ denotes the parameters of a neural network or neural operator employed to define $T_\theta(\cdot\,; y)$. 

A key aspect of our approach is the use of the energy distance to quantify the discrepancy between distributions, resulting in the objective function defined by Eq.~\eqref{eq:objective_e}. In contrast to approaches based on minimizing the Kullback--Leibler divergence, this choice does not require the transport map to be invertible, nor does it require evaluation of Jacobian determinants, which are both necessary for evaluation of the pushforward density. This allows for flexible map parameterizations and makes the method applicable in high- and infinite-dimensional settings.

Moreover, the averaged energy distance admits a representation in terms of
expectations with respect to distributions that are directly accessible in
practice, namely the joint distribution $\gamma$ and its marginal $\kappa$.
Since the energy distance is naturally compatible with sample-based approximation, this representation enables training using only samples from the
joint distribution, without requiring explicit knowledge of the prior or the
likelihood. The resulting formulation is therefore \emph{likelihood-free} and falls in the category of \emph{simulation-based inference} techniques.

Using the definition of the energy distance in Eq.~\eqref{eq:ED_def}, the following 
lemma rewrites the objective in Eq.~\eqref{eq:objective_e} entirely in terms of expectations over \(\gamma\) 
and its marginal \(\kappa\).
The proof is given in Appendix~\ref{app:proof_objective_e}.

\
\begin{lemma}
\label{lemma:objective_e_updated}
Minimizing the objective function $\LossL(\theta)$ in Eq.~\eqref{eq:objective_e} is equivalent 
to minimizing the objective $\LossJ(\theta)$ where
\label{eq:J(theta)}
\begin{align}
    \LossJ(\theta) &= 2\mathbb{E}^{\left(z, (u^\prime, y^\prime)\right)\sim \left(\muref \otimes \gamma \right)} \| T_\theta(z; y^\prime) - u^\prime\|\notag\\ 
    &\qquad \qquad-\mathbb{E}^{\left(z,z',y''\right)\sim \left(\muref \otimes \muref \otimes \kappa \right)}\| T_\theta(z; y'')-T_\theta(z^\prime; y'') \|.
\end{align}
\end{lemma}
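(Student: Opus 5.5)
The plan is to expand $\LossL(\theta)$ using the definition \eqref{eq:ED_def} of the energy distance. We then show that it equals $\LossJ(\theta)$ plus a term that does not depend on $\theta$. Minimizers of the two objectives therefore coincide.

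Fix $y$ and apply \eqref{eq:ED_def} with $\mu=\pi(\cdot\mid y)$ and $\nu=B_\theta(\cdot\,;y)=T_\theta(\cdot\,;y)_\#\muref$. This gives three terms. The cross term is $2\mathbb{E}^{(u,v)\sim\pi(\cdot\mid y)\otimes B_\theta(\cdot;y)}\|u-v\|$. The posterior self-interaction term $\mathbb{E}^{(u,u')\sim\pi(\cdot\mid y)^{\otimes 2}}\|u-u'\|$ does not involve $\theta$. The last term is the self-interaction of $B_\theta$.

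Next, rewrite the two $\theta$-dependent terms by the change-of-variables formula for pushforwards. Writing $v=T_\theta(z;y)$ with $z\sim\muref$, the cross term becomes
\[
2\mathbb{E}^{z\sim\muref}\,\mathbb{E}^{u\sim\pi(\cdot\mid y)}\|T_\theta(z;y)-u\|.
\]
Writing $v=T_\theta(z;y)$ and $v'=T_\theta(z';y)$ with $(z,z')\sim\muref\otimes\muref$, the $B_\theta$ self-interaction term becomes
\[
\mathbb{E}^{(z,z')\sim\muref\otimes\muref}\|T_\theta(z;y)-T_\theta(z';y)\|.
\]

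Now average over $y\sim\kappa$. The disintegration of the joint measure, $\gamma(du,dy)=\pi(du\mid y)\,\kappa(dy)$, turns $\mathbb{E}^{y\sim\kappa}\mathbb{E}^{u\sim\pi(\cdot\mid y)}$ into $\mathbb{E}^{(u,y)\sim\gamma}$. Independence of $z$ from $(u,y)$, together with Fubini, then gives the first term of $\LossJ$. The averaged $B_\theta$ self-interaction term is exactly the second term of $\LossJ$, after relabelling $y$ as $y''$. The remaining piece,
\[
C=\mathbb{E}^{y\sim\kappa}\,\mathbb{E}^{(u,u')\sim\pi(\cdot\mid y)^{\otimes2}}\|u-u'\|,
\]
is independent of $\theta$. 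Hence $\LossL(\theta)=\LossJ(\theta)-C$, and the two objectives have the same minimizers.

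The main obstacle is justifying the manipulations rigorously. Each expectation must be finite so that the rearrangement is not of the form $\infty-\infty$ and Fubini applies. I would assume a first-moment condition, $\mathbb{E}^{\rho}\|u\|<\infty$, which gives $C\le 2\mathbb{E}^{\gamma}\|u\|<\infty$. I would also assume $\mathbb{E}^{\muref\otimes\kappa}\|T_\theta(z;y)\|<\infty$, and use the triangle inequality to bound every term.

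A second point is measurability: a regular conditional distribution $\pi(\cdot\mid y)$ must exist. This holds on Polish (in particular separable Banach) spaces, which makes $y\mapsto\DE^2(\pi(\cdot\mid y),B_\theta(\cdot\,;y))$ measurable. The map $(z,y)\mapsto T_\theta(z;y)$ must also be jointly measurable. With these standing assumptions stated, the remaining computation is routine.
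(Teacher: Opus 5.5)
Your proposal is correct and follows essentially the same route as the paper. Both arguments expand the energy distance, rewrite the two $\theta$-dependent terms via the pushforward $B_\theta(\cdot\,;y)=T_\theta(\cdot\,;y)_\#\muref$ and the disintegration $\gamma(du,dy)=\pi(du\mid y)\,\kappa(dy)$, and drop the $\theta$-independent posterior self-interaction term, giving $\LossL(\theta)=\LossJ(\theta)-C$. Your first-moment and measurability conditions, which the paper leaves implicit, are a sensible addition that rules out an $\infty-\infty$ rearrangement; for a Gaussian prior, $\mathbb{E}^{\rho}\|u\|<\infty$ holds automatically by Fernique's theorem.
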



\begin{remark}[Choice of reference measure]
\label{rem:reference_measure}
Throughout this paper, we primarily choose the latent space $\mathcal{Z} = \mathcal{U}$ and take the reference measure 
\(\muref\) to be the prior \(\rho\). However, 
the energy-distance objective does not require this particular choice. For 
example, one could first construct a tractable approximation to the posterior, 
such as a Gaussian approximation obtained by variational inference, and then 
learn a transport map from this reference measure to the true posterior. More 
generally, \(\muref\) need not be defined on \(\mathcal U\); it may be a 
measure on a (possibly finite-dimensional) latent space provided that the range of the transport map takes values in 
\(\mathcal U\). 
\end{remark}

 We now show that the objective in Eq.~\eqref{eq:objective_e} is equivalent, up to 
terms independent of \(\theta\), to an averaged energy-score objective, with the 
energy score defined in Eq.~\eqref{eq:ES_def}. The proof is given in
Appendix~\ref{app:proof_ES}.

\begin{lemma}[Equivalence between averaged energy distance and expected energy score]
\label{lem:ES}
Let $\gamma \in \mathcal P(\mathcal U \times \mathcal Y)$ be the joint law of $(u,y)$, 
with marginal law $\kappa$ on $\mathcal Y$ and conditional law $\pi(\cdot \mid y)$ on $\mathcal U$. 
Let $B_\theta(\cdot \,; y)$ be a family of probability measures on $\mathcal U$. Then
\begin{equation}
\label{eq:ES}
\mathbb{E}^{y \sim \kappa}
\Big[
\DE^2\bigl(\pi(\cdot \mid y),B_\theta(\cdot \,; y)\bigr)
\Big]
=
2\,\mathbb{E}^{(u,y)\sim \gamma}
\Big[
\ES\bigl(B_\theta(\cdot \,; y),u\bigr)
\Big]
- \mathsf{const},
\end{equation}
where
\[
\label{eq:C_ES2}
\mathsf{const}
:=
\mathbb{E}^{y\sim\kappa}
\Big[
\mathbb{E}^{(u,u')\sim \pi(\cdot\mid y)\otimes \pi(\cdot\mid y)}
\|u-u'\|
\Big]
\]
is independent of $\theta$. Consequently, minimizing the expected energy score is equivalent to minimizing the objective defined in Eq.~\eqref{eq:objective_e}.
\end{lemma}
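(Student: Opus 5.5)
The plan is to expand the squared energy distance pointwise in $y$ and then integrate against $\kappa$, using the disintegration $\gamma(du,dy)=\pi(du\mid y)\,\kappa(dy)$ to recombine the cross term into an expectation over the joint law. Throughout I will assume enough integrability for all terms to be finite. Concretely, I will assume $\mathbb{E}^{(u,y)\sim\gamma}\|u\|<\infty$ and $\mathbb{E}^{y\sim\kappa}\mathbb{E}^{v\sim B_\theta(\cdot\,;y)}\|v\|<\infty$. By the triangle inequality, every expected distance below is then finite. This also lets us split the expectation of a sum into a sum of expectations without encountering $\infty-\infty$.

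First, fix $y$ and apply the definition \eqref{eq:ED_def} with $\mu=\pi(\cdot\mid y)$ and $\nu=B_\theta(\cdot\,;y)$:
\[
\DE^2\bigl(\pi(\cdot\mid y),B_\theta(\cdot\,;y)\bigr)
=2\,\mathbb{E}^{u\sim\pi(\cdot\mid y)}\Bigl[\mathbb{E}^{v\sim B_\theta(\cdot\,;y)}\|u-v\|\Bigr]
-\mathbb{E}^{(v,v')\sim B_\theta\otimes B_\theta}\|v-v'\|
-\mathbb{E}^{(u,u')\sim\pi\otimes\pi}\|u-u'\|.
\]
Here Fubini--Tonelli rewrites the product expectation in the first term as an iterated one. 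Next, I rewrite the first two terms via the energy score \eqref{eq:ES_def}. Since the second term does not depend on $u$, it can be placed inside the $u$-expectation, giving
\[
2\,\mathbb{E}^{u\sim\pi(\cdot\mid y)}\Bigl[\mathbb{E}^{v\sim B_\theta}\|u-v\|-\tfrac12\,\mathbb{E}^{(v,v')\sim B_\theta\otimes B_\theta}\|v-v'\|\Bigr]
=2\,\mathbb{E}^{u\sim\pi(\cdot\mid y)}\ES\bigl(B_\theta(\cdot\,;y),u\bigr).
\]
Thus $\DE^2(\pi(\cdot\mid y),B_\theta(\cdot\,;y))=2\,\mathbb{E}^{u\sim\pi(\cdot\mid y)}\ES(B_\theta(\cdot\,;y),u)-\mathbb{E}^{(u,u')\sim\pi\otimes\pi}\|u-u'\|$ for $\kappa$-a.e.\ $y$.

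Finally, I take $\mathbb{E}^{y\sim\kappa}$ of both sides. The tower property (disintegration) gives $\mathbb{E}^{y\sim\kappa}\mathbb{E}^{u\sim\pi(\cdot\mid y)}[f(u,y)]=\mathbb{E}^{(u,y)\sim\gamma}[f(u,y)]$. Applied to $f(u,y)=\ES(B_\theta(\cdot\,;y),u)$, this yields \eqref{eq:ES}, with $\mathsf{const}$ exactly the averaged posterior self-distance term. That term involves only $\pi$ and $\kappa$, so it is independent of $\theta$, and the equivalence of the two minimization problems follows immediately.

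The main obstacle is measure-theoretic rather than algebraic, and it has two parts. First, $y\mapsto B_\theta(\cdot\,;y)$ must be a measurable kernel so that $(u,y)\mapsto\ES(B_\theta(\cdot\,;y),u)$ is jointly measurable. For $B_\theta=T_\theta(\cdot\,;y)_\#\muref$ with $T_\theta$ jointly measurable, I will write $\ES$ as an integral over $z,z'\sim\muref$ and apply Fubini. Second, the integrability assumptions must guarantee that each split term is finite. I will state these as standing hypotheses (finite first moments of $\gamma$ and of the pushforwards) and note that the identity holds in $(-\infty,\infty)$ under them.
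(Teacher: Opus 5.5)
Your proof is correct and follows essentially the same route as the paper. You expand $\DE^2$, isolate the $\theta$-independent posterior self-distance term as $\mathsf{const}$, and use the disintegration $\gamma(du,dy)=\pi(du\mid y)\,\kappa(dy)$ to rewrite the remaining terms as $2\,\mathbb{E}^{(u,y)\sim\gamma}[\ES(B_\theta(\cdot\,;y),u)]$; the differences are cosmetic (you absorb the $B_\theta\otimes B_\theta$ term into the $u$-expectation pointwise in $y$ before integrating against $\kappa$, whereas the paper inserts the dummy $\pi(du\mid y)$ integral after integrating), and your first-moment and kernel-measurability hypotheses make explicit what the paper leaves implicit.
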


\begin{remark}[Generalized energy distances]
\label{rem:generalized_energy_distance}
In general, one may replace the norm terms in Eq.~\eqref{eq:ED_def} and Eq.~\eqref{eq:ES_def} that define the energy distance by \(\|\cdot\|^\beta\), for suitable choices of 
\(\beta\). This leads to the generalized energy distance and corresponding 
energy score. More generally, one can also replace the norm with a general metric for data living in an arbitrary metric space provided that the resulting distance remains a valid divergence between probability measures~\cite{szekely2013energy}. In this paper, we focus on the energy distance with exponent \(\beta=1\); see~\cite[Section 11.3]{bach_machine_2025} for the properties of energy distance including its derivation from a strictly proper scoring rule and connection to the Continuous Ranked Probability Score, which is commonly used to assess probabilistic forecasts~\cite{gneiting2007strictly}.
\end{remark}

Throughout this paper, we will work under the following assumption about the data used
to train our amortized Bayesian inference model:

\begin{dassumption}
\label{assumption:data}
We are given \(N\) i.i.d.\ samples
\[
    \{(u^{(i)},y^{(i)})\}_{i=1}^N
    \sim \gamma,
\]
where \(\gamma\) is the joint distribution of parameters and observations.
\end{dassumption}

The following proposition provides an empirical approximation of the objective function \(\LossJ(\theta)\).
\begin{proposition}
Under Data Assumption~\ref{assumption:data}, with the reference measure 
\(\muref\) taken to be the prior distribution $\rho$, an empiricial approximation to \(\LossJ(\theta)\) is given by
\begin{equation}
\label{eq:JN}
\begin{aligned}
\LossJ^N(\theta)
&= \frac{2}{N(N-1)} \sum_{\substack{i,j=1\\ i\neq j}}^N
\left\| T_\theta\!\big(u^{(i)}; y^{(j)}\big) - u^{(j)} \right\| \\
&\quad - \frac{1}{N(N-1)(N-2)} \sum_{\substack{i,j,k=1\\ i\neq j,\ i\neq k,\ j\neq k}}^N
\left\| T_\theta\!\big(u^{(i)}; y^{(k)}\big)
      - T_\theta\!\big(u^{(j)}; y^{(k)}\big) \right\|.
\end{aligned}
\end{equation}
The first term encourages $T_\theta(u^{(i)};y^{(j)})$ to match the target $u^{(j)}$, while the second promotes spread in the transported samples for each fixed conditioning value $y^{(k)}$.
\end{proposition}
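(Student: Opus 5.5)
The plan is to show that $\LossJ^N(\theta)$ is a U-statistic estimator of $\LossJ(\theta)$: it is unbiased, and it converges almost surely as $N\to\infty$ for fixed $\theta$. With $\muref=\rho$, the first term of $\LossJ(\theta)$ in Lemma~\ref{lemma:objective_e_updated} is an expectation over $\rho\otimes\gamma$, and the second is an expectation over $\rho\otimes\rho\otimes\kappa$. The key observation is that, under Data Assumption~\ref{assumption:data}, the $u$-components of the pairs are samples from the marginal $\rho$, the $y$-components are samples from $\kappa$, and components with \emph{distinct} indices are independent. So each summand in \eqref{eq:JN} is a sample from the correct product measure, provided the indices are chosen distinct.

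For the first term, fix $i\neq j$. Since $(u^{(i)},y^{(i)})$ and $(u^{(j)},y^{(j)})$ are independent draws from $\gamma$, the triple $(u^{(i)},(u^{(j)},y^{(j)}))$ has law $\rho\otimes\gamma$. Hence
\[
\mathbb{E}\,\bigl\|T_\theta(u^{(i)};y^{(j)})-u^{(j)}\bigr\|=\mathbb{E}^{(z,(u',y'))\sim\rho\otimes\gamma}\,\|T_\theta(z;y')-u'\|.
\]
Averaging over the $N(N-1)$ ordered pairs and multiplying by $2$ gives an unbiased estimator of the first term. The pair $i=j$ must be excluded: otherwise $z$ and $y'$ would be dependent, and the summand would sample $\gamma$ rather than $\rho\otimes\gamma$.

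For the second term, take $i,j,k$ pairwise distinct. Then $u^{(i)}$, $u^{(j)}$ and $y^{(k)}$ come from three independent draws, so $(u^{(i)},u^{(j)},y^{(k)})$ has law $\rho\otimes\rho\otimes\kappa$. Each summand therefore has expectation equal to the second term of $\LossJ$, and dividing by the $N(N-1)(N-2)$ ordered triples gives unbiasedness. Linearity then yields $\mathbb{E}\,\LossJ^N(\theta)=\LossJ(\theta)$. One needs $N\ge 3$, and integrability requires finite first moments of $\rho$ and of $T_\theta(\cdot;y)_\#\rho$, averaged over $\kappa$. I would state these as standing moment assumptions; they are already implicit in $\LossJ$ being finite.

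To justify calling \eqref{eq:JN} an approximation beyond unbiasedness, I would symmetrize each term over permutations of its indices. This writes each term as a U-statistic of order $2$ (respectively $3$) in the i.i.d.\ samples $w^{(i)}=(u^{(i)},y^{(i)})$, with kernels $h_1(w_1,w_2)=\|T_\theta(u_1;y_2)-u_2\|$ and $h_2(w_1,w_2,w_3)=\|T_\theta(u_1;y_3)-T_\theta(u_2;y_3)\|$. Hoeffding's strong law for U-statistics then gives almost sure convergence to $\LossJ(\theta)$, under the integrability of the kernels noted above. The main obstacle is only bookkeeping of independence: one must check carefully that distinct indices give the product laws while the coupled pair $(u^{(j)},y^{(j)})$ in the first term retains the joint law $\gamma$. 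Uniform-in-$\theta$ convergence is not claimed and would need additional continuity or compactness assumptions.
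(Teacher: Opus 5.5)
Your proposal is correct. The paper gives no separate proof of this proposition; it presents \eqref{eq:JN} as the plug-in form of Lemma~\ref{lemma:objective_e_updated}, obtained by reusing the $u^{(i)}$ as reference draws (they are $\rho$-distributed, since $\rho$ is the $\mathcal U$-marginal of $\gamma$) and restricting to distinct indices so that each summand is a sample from $\rho\otimes\gamma$ or $\rho\otimes\rho\otimes\kappa$. Your unbiasedness computation is exactly this reasoning, and your U-statistic/Hoeffding strong-law step, together with the $N\ge 3$ and first-moment conditions you record (joint measurability of $(z,y)\mapsto T_\theta(z;y)$ is also worth stating), gives the precise sense of ``empirical approximation'' that the paper leaves unstated.
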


\subsection{Parameterization of the Transport Map}
\label{ssec:transportmap}
To complete the formulation, we now specify the class of transport maps $T_\theta$.
In finite dimensions, we use standard deep neural network architectures. In the infinite-dimensional setting, 
the transport map is subject to structural constraints, which we incorporate in the architecture as a form of inductive bias. 
Since the posterior is absolutely continuous with respect to the prior, it is desirable that
the approximation preserves this property. Satisfying this property in infinite dimensions requires arbitrary perturbations of $u$ to induce pushforward measures that remain absolutely continuous with respect to the prior. For the Gaussian priors considered in this work, we next describe the structural assumptions under which this property holds. The detailed background on this topic is found in Appendix~\ref{sec:preliminaries}.

Motivated by the Cameron--Martin theorem explained in Appendix~\ref{sec:preliminaries}, we  parameterize the transport map as
\begin{equation}
\label{eq:T_CM}
T_\theta(u; y) = u + C^{1/2}\,S_\theta(u; y),
\end{equation}
where $C$ is the covariance operator of the Gaussian prior. Provided that $S_\theta(\cdot\,, y)$ maps $\mathcal{U}$ into itself for each fixed $y$ in the support
of $\kappa$, this construction ensures that the perturbation of $u$ lies in the Cameron--Martin space. As we will see, this form of inductive bias in the
learned model will help promote the correct behaviour of the computed posterior for components of the posterior that are not sensitive to the observed data; hence, it leads to more accurate posterior inference.

The map $S_\theta$ is implemented using a neural operator, allowing us to represent nonlinear maps between functions in a way that is consistent with the infinite-dimensional structure of the problem. For example, in the PDE-based inverse problem (Experiments 2 and 3), which we will discuss in Section~\ref{sec:numerics}, the latent field $u$ is defined on the spatial domain $(0,1)$ with homogeneous Neumann boundary conditions. The Mat\'{e}rn covariance operator
\[
C = \sigma^2 (-\Delta + \tau^2 I)^{-\alpha},
\]
given variance amplitude $\sigma^2 > 0$, inverse correlation length $\tau > 0$ and smoothness parameter $\alpha > 0$, is diagonalized by the cosine basis, which corresponds to the eigenfunctions of the Laplacian with Neumann boundary conditions. To respect this structure, we parameterize $S_\theta$ using a Fourier neural operator adapted to Neumann boundary conditions, implemented via the discrete cosine transform (DCT). This ensures that both the covariance operator and the neural parameterization share the same spectral structure.

The action of \(C^{1/2}\) in Eq.~\eqref{eq:T_CM} is implemented spectrally. Let  
\(\{(\lambda_k,\phi_k)\}_{k\geq 1}\) denote the eigenpairs of the covariance 
operator \(C\). By writing
\[
S_\theta(u; y) = \sum_{k} s_{\theta,k}(u; y)\,\phi_k,
\]
we can apply \(C^{1/2}\) by scaling each coefficient in the sum as
\[
C^{1/2} S_\theta(u; y) = \sum_{k} \sqrt{\lambda_k}\, s_{\theta,k}(u; y)\,\phi_k.
\]
Since the eigenvalues $\lambda_k$ decay to zero as $k \to \infty$, the operator $C^{1/2}$ attenuates high-frequency modes and thus acts as a smoothing operator. This ensures that the perturbation lies in a more regular subspace, consistent with the Cameron--Martin structure.
For comparison, we also consider numerical experiments using the baseline parameterization
\begin{equation}
T_\theta(u; y) = u + S_\theta(u; y),
\end{equation}
which, hence, does not enforce this Cameron--Martin structure.

\section{Numerical Experiments}
\label{sec:numerics}

This section evaluates the proposed amortized transport methodology on a sequence of inverse problems of increasing complexity. The purpose of these experiments 
is to illustrate the main ideas of the proposed methodology rather than to provide a comprehensive benchmarking study. The problems are not at the edge of computational feasibility, but rather are chosen to be tractable via MCMC methodology in order to demonstrate the benefits of our approach in comparison to a gold-standard method. 
In particular, we show that: (i) use of the energy-distance objective leads to
efficient observation-dependent posterior approximations that outperform MCMC when multiple observation instances are encountered; and that (ii) the Cameron--Martin-informed parameterization is beneficial in function-space settings, analogously to the benefits of function-space formulations of MCMC. 

In Subsection~\ref{sec:exp1}, we consider a finite-dimensional nonlinear example that has a tractable non-Gaussian posterior, followed by two PDE-based inverse problems defined by Darcy flow and wave propagation equations in Subsections~\ref{sec:darcy} and~\ref{sec:waveeq}, respectively. In the Darcy problem, we recover a log-normal permeability field, and in the wave propagation problem, we recover a piecewise constant wavespeed field, defined via thresholding a Gaussian random field. Both examples can be formulated in terms of a Gaussian random field prior, and thus the structure from Subsection~\ref{ssec:transportmap} can be employed. In all three sets of experiments, we work with an inverse problem in which the noise is 
additive and replace the model in~\eqref{eq:GIP} by
\begin{equation} \label{eq:GIPadd}
y = \cG(u) + \eta,
\end{equation}
for some map $\cG: \mathcal{U} \to \mathcal{Y}$. Throughout this section, we work under Data Assumption~\ref{assumption:data}. For the two PDE-based inverse problems, we compare our method with the pCN method \cite{cotter_mcmc_2013} since this is designed for function-space inference problems. Additional details on the numerical experiments are included in Appendix~\ref{sec:numerical_details}.

\subsection{Experiment 1: Tractable Non-Gaussian Posterior} \label{sec:exp1}

We consider a finite-dimensional inverse problem for which the posterior distribution can be accurately approximated using a quadrature rule, which allows us to compute the errors in the solution with respect to the amount of training data and the number of parameters of the model. This example serves two purposes. First, it provides a clear illustration of the amortized inverse problem framework, where a single learned map is used to approximate the posterior distributions corresponding to different observations. Second, despite its low dimensionality, this example is nontrivial: for certain ranges of the observation, the posterior distribution is bimodal, whilst for others it is unimodal; this makes the amortization challenging.

Consider \eqref{eq:GIPadd} with $\mathcal{U}=\mathcal{E}=\mathcal{Y}=\mathbb{R}$ and $\cG(u)=u^2.$
The prior and observational noise are given by
\[
u \sim \mathcal{N}(m_0,\sigma_0^2),
\qquad
\eta \sim \mathcal{N}(0,\sigma^2).
\]
By Bayes' theorem, the posterior density is given by
\[
\pi(u \mid y)
=
\frac{1}{Z(y)}
\exp\!\left(
-\frac{(u - m_0)^2}{2\sigma_0^2}
-\frac{(y - \cG(u))^2}{2\sigma^2}
\right),
\]
where the normalizing constant $Z(y)$ is
\[
Z(y)
=
\int_{\mathbb{R}}
\exp\!\left(
-\frac{(u - m_0)^2}{2\sigma_0^2}
-\frac{(y - \cG(u))^2}{2\sigma^2}
\right)\,du.
\]

In the numerical experiments below, we consider the specific parameter values
\[
m_0 = 0, 
\qquad 
\sigma_0 = 1, 
\qquad 
\sigma = 1.
\]
For this experiment, the transport map $T_\theta(u; y)$ is parameterized as
\[T_\theta(u; y) = u + S_\theta(u; y),
\]
where $S_\theta(u; y)$ is a multilayer perceptron (MLP) 
with the hyperparameters summarized in the first column of Table~\ref{tab:exp_settings} in Appendix~\ref{sec:numerical_details}.

After training, the learned map $T_\theta(\cdot \,; y^\dagger)$ can be used to generate posterior samples for different realizations of observations $y^\dagger$. 
In this example, the posterior distribution is bimodal for $y^\dagger > 0$,
but unimodal for $y^\dagger \le 0.$ The posterior is continuous in $y^\dagger$
\cite{stuart_inverse_2010} and the learned approximate posterior is able to capture the smooth transition from uni- to bimodality. 
In Fig.~\ref{fig:Exp1_post}, we plot the pushforward distribution 
$T_\theta(\cdot \,; y^\dagger)_\# \rho(u)$ and compare it with the true posterior distribution $\pi(u \mid y^\dagger)$.

Since training in this experiment is computationally inexpensive as compared to the other experiments, we use this setting to study the scaling of the empirical estimate of the expected squared error
$\mathbb{E}^{y \sim \kappa}\left[
\DE^2\big(\pi(\cdot \mid y), B_\theta(\cdot \,;y)\big)
\right]$
with respect to the 
size of the training dataset, $N$. The details of the scaling study are presented in Appendix ~\ref{sec:numerical_details}. In summary, we observe that statistical error converges at least at the expected Monte Carlo rate with respect to $N$ for different model sizes. These studies can be used to build a strategy for selecting an appropriate model size that minimizes the  error for each $N$, but we re-emphasize that our main goal in this work is to demonstrate the ability of the new methodology for Bayesian Inference rather than obtaining the optimal convergence rates.



\begin{figure}
    \centering
    \includegraphics[
        width=\linewidth,
    ]{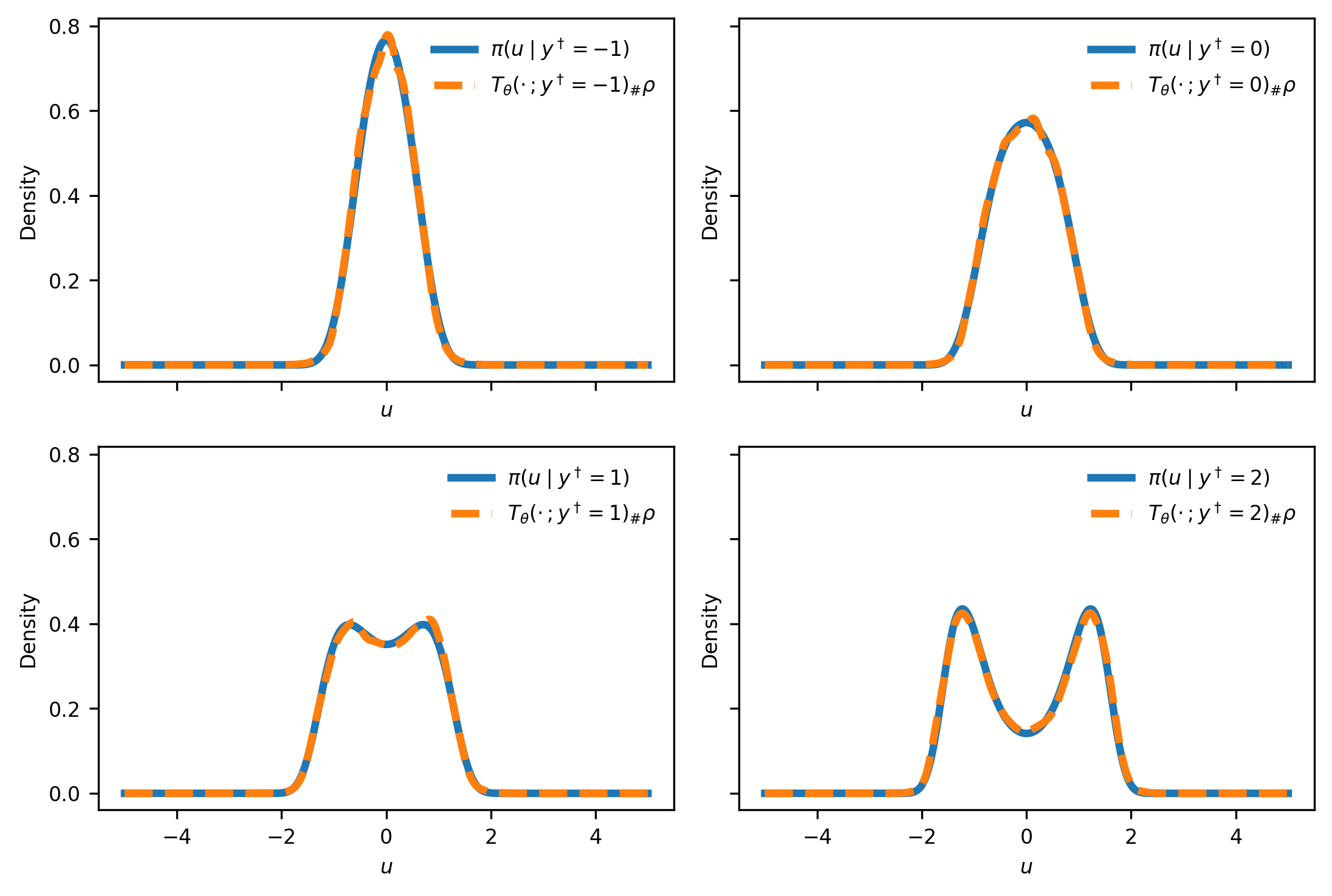}
    \caption{Tractable One-Dimensional Posterior: Close agreement between the true posterior $\pi(u|y^\dagger)$ (blue) and the approximate pushforward distribution $T(\cdot\,;y^\dagger)_\#\rho(u)$ (orange) for different observations $y^\dagger \in \{-1,0,1,2\}$.}
    \label{fig:Exp1_post}
\end{figure}

\subsection{Experiment 2: Darcy Flow Inverse Problem} \label{sec:darcy}

Next, we consider the one-dimensional Darcy flow equation with homogeneous Dirichlet boundary conditions:
\begin{subequations}
\begin{align*}
&- \frac{d}{dx}\left( a(x)\frac{dp}{dx}(x) \right) = 1, \qquad x \in (0,1),\\
&p(0)=0, \qquad p(1)=0.
\end{align*}
\end{subequations}
Here \(p(x)\) denotes the pressure field and \(a(x)>0\) denotes the permeability.
We are interested in the inverse problem of determining $a$, given measurements of $p$. 
However, to enforce positivity of $a$, we parameterize it through the log-permeability field
$u(x)=\log a(x).$
We place a Gaussian prior on the log-permeability field,
\begin{equation*}
u \sim \mathcal N(0,C),
\qquad
C = \sigma^2(-\Delta + \tau^2 I)^{-\alpha},
\end{equation*}
where \(-\Delta\) is the one-dimensional Laplacian on \((0,1)\) with homogeneous
Neumann boundary conditions. As discussed in Subsection~\ref{ssec:transportmap}, the covariance operator is diagonal in the cosine
basis. In particular, after excluding the constant mode, its eigenfunctions and
eigenvalues are given by
\begin{equation*}
\phi_k(x)=\cos(k\pi x),
\qquad
\lambda_k=\sigma^2\big((k\pi)^2+\tau^2\big)^{-\alpha},
\qquad k\in \mathbb N.
\end{equation*}
In this experiment, we use
\begin{equation*}
\tau = 3,
\qquad
\alpha = 2,
\qquad
\sigma = 1.
\end{equation*}
The observation operator consists of pressure measurements at eight equally spaced
interior points \(x_1,\ldots,x_8\). Thus,
\begin{equation}
\label{eq:exp2_obs_op}
 \cG(u)
=
\bigl(
p(x_1;u),\ldots,p(x_8;u)
\bigr)
\in \mathbb R^8.
\end{equation}
The observed data are generated according to Eq.~\eqref{eq:GIPadd}
where the observational noise is Gaussian with the form
\begin{equation*}
\eta \sim \mathcal N(0,\sigma_{\mathrm{obs}}^2 I),
\qquad
\sigma_{\mathrm{obs}} = 1 \times 10^{-3}.
\end{equation*}
The inverse problem is to infer the conditional distribution \(\pi(\cdot \mid y)\)
of the log-permeability field from noisy pressure observations. Since \(u\) is a
function on \((0,1)\), this defines an infinite-dimensional Bayesian inverse
problem. To train the transport map, we generate samples from the joint distribution $\gamma$ over
\((u,y)\). Specifically, we first draw \(u^{(i)} \sim \mathcal N(0,C)\), solve the
Darcy flow equation using a second-order finite-difference discretization on a
uniform grid with \(64\) spatial points, and then generate synthetic observations $y^{(i)}$ using Eq.~\eqref{eq:exp2_obs_op} with additive Gaussian noise. This yields training data pairs
as in Data Assumption \ref{assumption:data}.

Using this data, we minimize the empirical objective in~\eqref{eq:JN} to train a transport map of the form~\eqref{eq:T_CM} where $S_\theta(u,y)$ is a Fourier Neural Operator (FNO). The second column of Table~\ref{tab:exp_settings} summarizes the hyperparameters used for the transport map in this experiment. For each observation \(y\), the learned
map defines the approximate posterior $T_\theta(\cdot \,;y)_\#\rho(u) \approx \pi(u \mid y)$. Since the transport map is trained in an amortized fashion, a single trained model can be used to approximate the posterior for different observation realizations. To illustrate the behavior of the learned posterior across different observations, we draw three representative observations \(y^\dagger\) independently from the marginal observation distribution \(\kappa\). 
Fig.~\ref{fig:Exp2_phys} shows the approximate posterior for these observations.
In each row, the left panel shows the noisy pressure observations, while the right panel shows the corresponding posterior for the log-permeability field \(u(x)\). The true field used to generate the data is shown in red. The learned pushforward posterior \(T_\theta(\cdot \,;y^\dagger)_\#\rho\) is shown in green through its empirical mean and one empirical standard deviation. These quantities are estimated by drawing \(20{,}000\) samples from the reference measure \(\rho\), pushing them forward through \(T_\theta(\cdot \,;y^\dagger)\), and computing the sample mean and standard deviation, and are compared with a reference posterior obtained using pCN, shown in blue.

For each observation \(y^\dagger\), the reference posterior is obtained by running an independent MCMC pCN sampler~\cite{cotter_mcmc_2013} for \(10^6\) steps, after discarding \(2\times 10^5\) initial samples as burn-in. In contrast, the learned transport map is reused without retraining or running a new sampler. Across the three observations, the approximate posterior closely matches the pCN posterior in both mean and standard deviation.

To further compare the learned posterior with the pCN reference, we project the obtained 
posterior samples with either transport or pCN onto the eigenfunctions of the prior covariance operator \(C\).
This provides a mode-by-mode comparison of the marginal posterior distributions
in the Karhunen--Lo\`{e}ve basis. For brevity, we show this comparison for the
observation instance corresponding to the first row of Fig.~\ref{fig:Exp2_phys};
similar behavior is observed for the other observations. Fig.~\ref{fig:Exp2_project2} compares the marginal distributions of the first
six KL coefficients under the prior, the pCN posterior, and the learned pushforward
posterior. The pushforward posterior closely matches the pCN posterior in the
leading modes, where the data have the strongest influence. In higher modes, both
the pCN posterior and the pushforward posterior approach the prior distribution,
indicating that these modes are weakly informed by the observations.

As discussed in Section~\ref{ssec:transportmap}, our main parameterization uses a
Cameron--Martin-informed transport map, in which the neural-operator output is
smoothed by the square root of the prior covariance operator. This construction
encourages the perturbation to lie in the Cameron--Martin space associated with
the prior, and is therefore consistent with the absolute-continuity structure of
the Bayesian posterior. To assess the effect of this structure, we compare it with a baseline
parameterization that uses the same neural-operator architecture but omits the
application of \(C^{1/2}\). This comparison isolates the role of the Cameron--Martin smoothing in the transport map.

To quantify the effect of the Cameron--Martin-informed parameterization, we project
samples from both learned posteriors onto the eigenfunctions of \(C\). For each mode, we compute the one-dimensional Wasserstein-1 distance between the marginal distribution of the learned posterior and the corresponding marginal distribution of the pCN reference posterior. Since these marginals are one-dimensional, the Wasserstein-1 distance is computed in closed form from samples using the empirical quantile functions. The resulting per-mode errors are shown in
Fig.~\ref{fig:Exp2_wass}. The Cameron--Martin-informed map yields consistently smaller errors than the
baseline map, with the improvement most visible in the higher modes. This indicates
that applying \(C^{1/2}\) helps control the small-scale components of the transported
field and improves agreement with the reference posterior in directions that are
weakly informed by the data.

Since the prior is supported on zero-mean fields, posterior samples should likewise
have zero spatial mean. The Cameron--Martin-informed parameterization preserves
this structure by construction. In contrast, the baseline map may generate fields
with a nonzero projection onto the constant mode, resulting in samples that are
inconsistent with the support of the prior.

\begin{figure}
    \centering
    
    \includegraphics[width=0.9\linewidth]{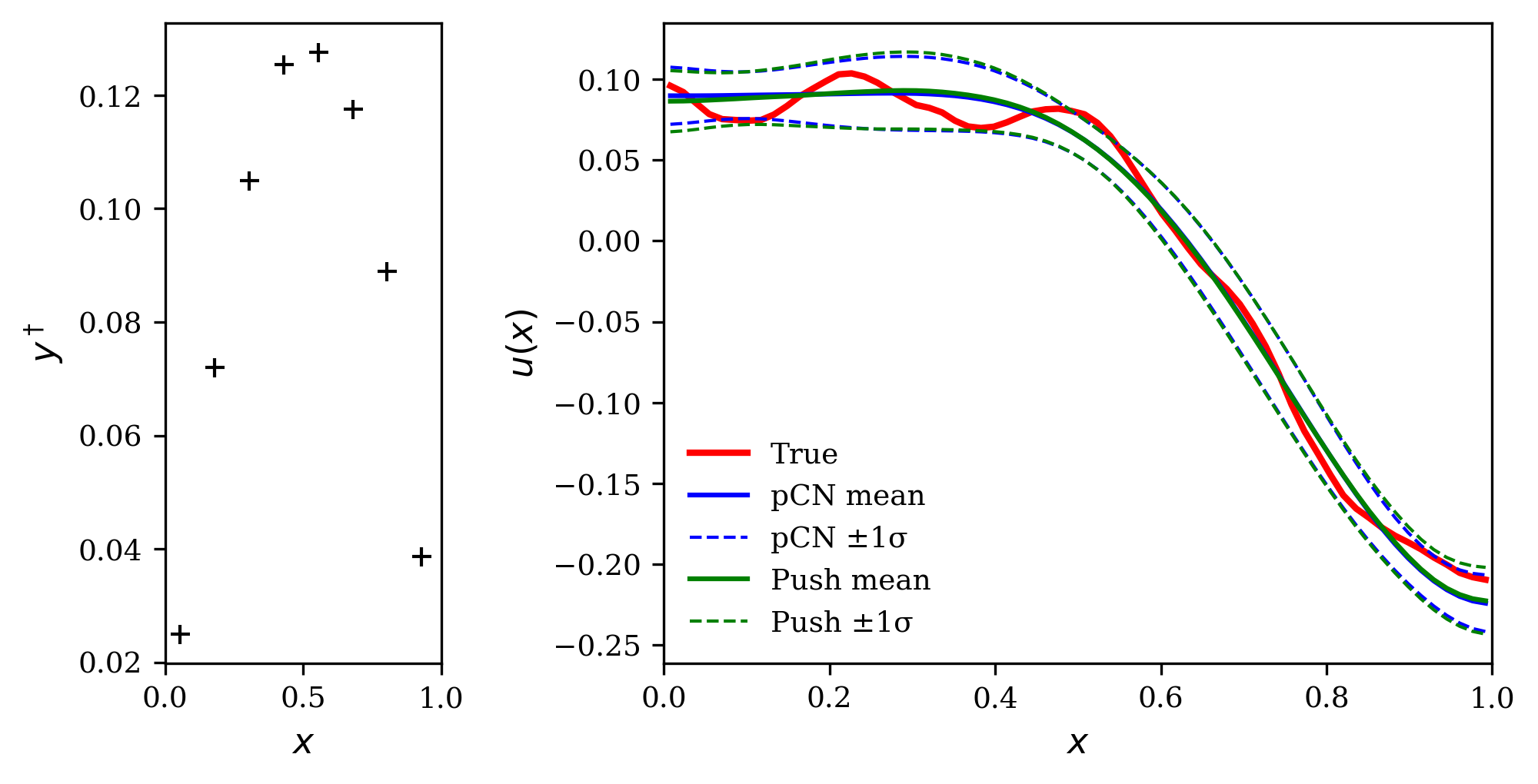}
    
    \vspace{0.5em}
    \includegraphics[width=0.9\linewidth]{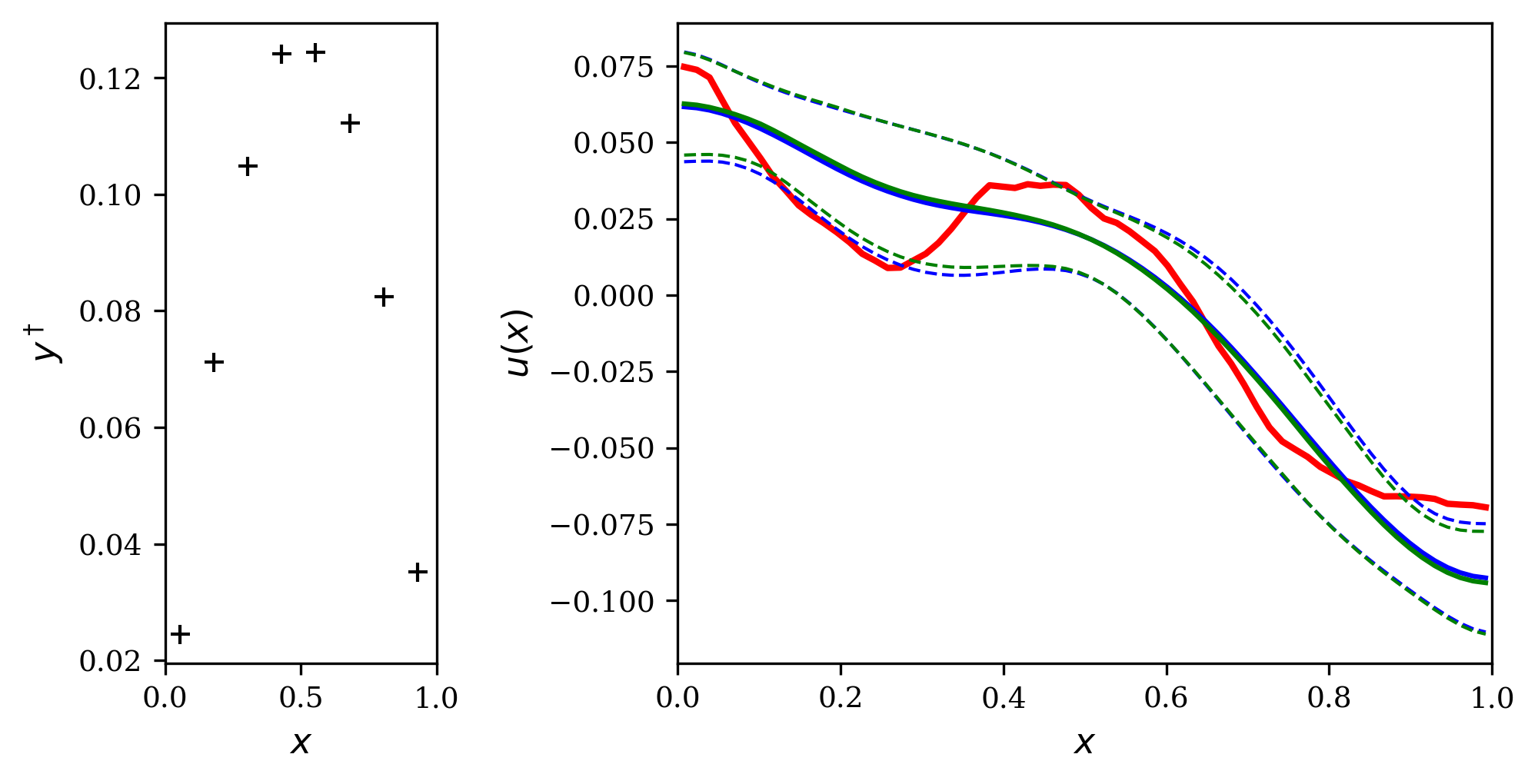}
    
    \vspace{0.5em}
    \includegraphics[width=0.9\linewidth]{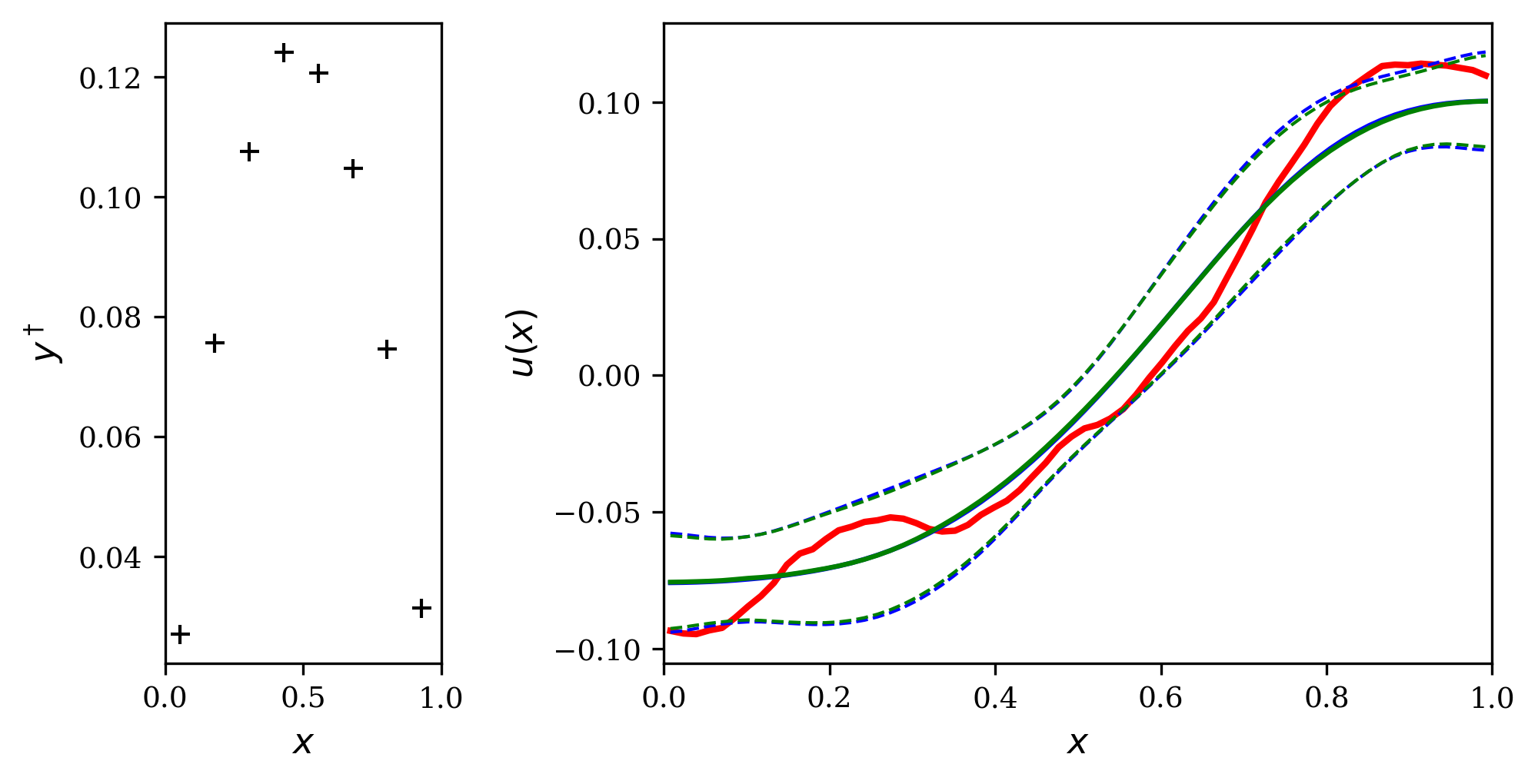}
    \caption{
    Darcy flow inverse problem: Posterior reconstruction of the log-permeability in physical space for three realizations of the noisy observations $y^\dagger$. 
    Each row corresponds to a distinct realization of $y^\dagger$. 
    For each realization, the pCN sampler is initialized and run independently to approximate the posterior, 
    whereas a single learned pushforward map $T(\cdot \,; y^\dagger)_\# \rho$ is reused across all realizations to produce posterior estimates. 
    Within each row, the left panel shows the noisy observations and the right panel shows the reconstruction. 
    The true field $u$ is shown in red, the pushforward approximation in green (mean $\pm$ standard deviation), 
    and the pCN posterior in blue (mean $\pm$ standard deviation). Note the close agreement between the pushforward approximation and the pCN posterior.
    }
    \label{fig:Exp2_phys}
\end{figure}

\begin{figure}
    \centering
    \includegraphics[width=\linewidth]{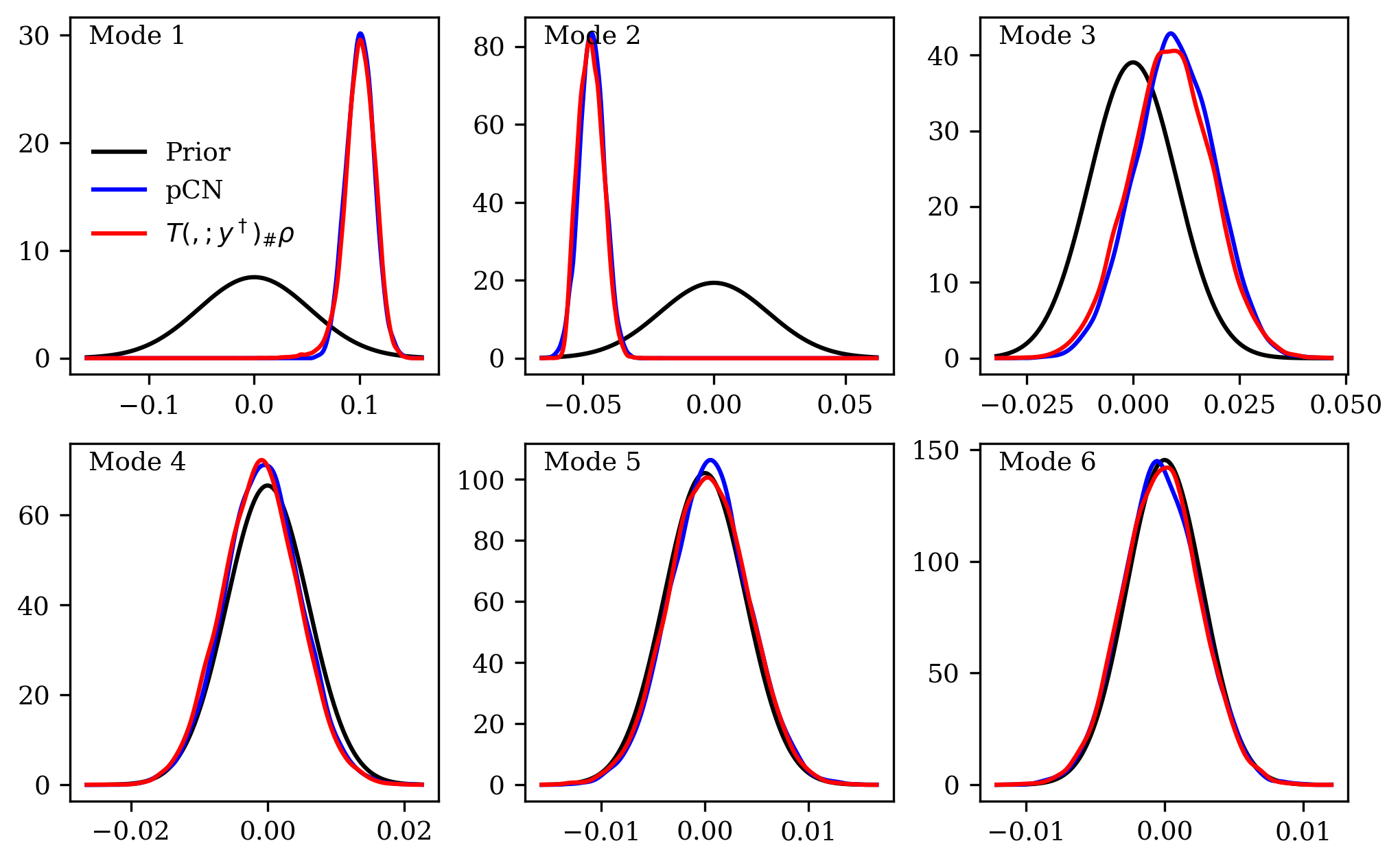}
    \caption{
    Darcy flow inverse problem: Projection onto the KL modes of the posterior distribution. Shown are the first six modes for the pCN posterior (blue), the pushforward $T(\cdot \,; y^\dagger)_\# \rho$ (red) and the prior (black). The pCN and pushforward distributions closely agree in the leading modes, while for higher modes they approach the prior, indicating limited information from the data.
    }
    \label{fig:Exp2_project2}
\end{figure}

\begin{figure}
    \centering
    \includegraphics[width=0.9\linewidth]{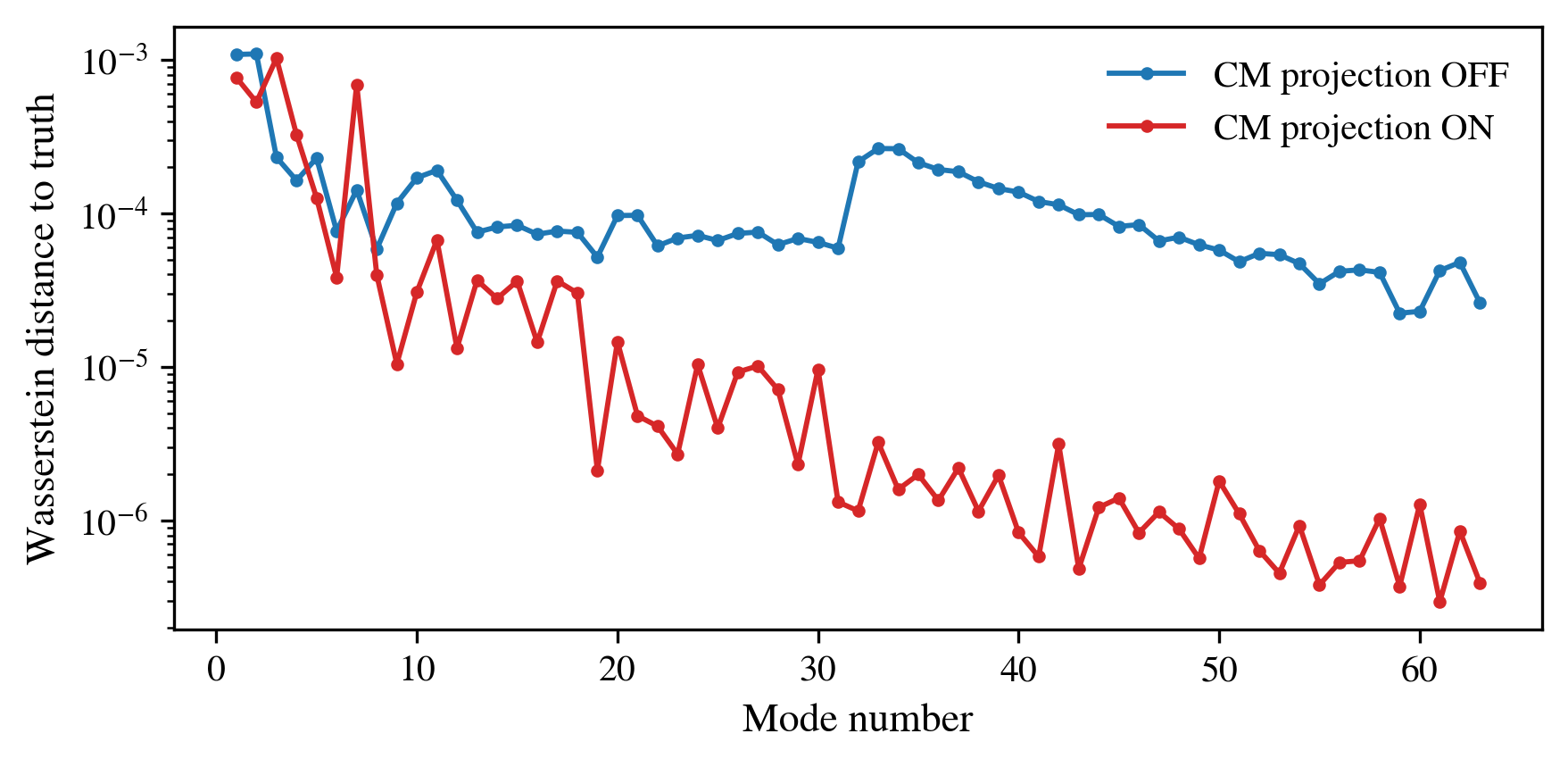}
    \caption{Darcy flow inverse problem: Per-mode Wasserstein error between learned pushforward posterior and the pCN reference posterior. The error is computed between the one-dimensional marginal distributions for the posterior distribution projected onto each KL basis of the prior covariance. The Cameron--Martin-informed map yields smaller errors than the baseline map without \(C^{1/2}\), especially in the higher modes.
}
    \label{fig:Exp2_wass}
\end{figure}

\subsection{Experiment 3: Wave Equation Inverse Problem} \label{sec:waveeq}

As a representative hyperbolic PDE-constrained inverse problem, we consider
wave propagation in a heterogeneous one-dimensional medium. The goal is to infer
the spatially varying wavespeed from indirect observations of the propagating
wavefield. In contrast to the Darcy flow example, the unknown coefficient in this
experiment is piecewise constant and contains sharp interfaces. The forward model is defined by the one-dimensional wave equation
\begin{equation*}
p_{tt}(x,t) - c(x)^2 p_{xx}(x,t) = f(x,t),
\qquad x \in (0,1),
\end{equation*}
with homogeneous Neumann boundary conditions and zero initial conditions. Here, 
\(p(x,t)\) denotes the wavefield, \(c(x)>0\) is the wavespeed, and \(f(x,t)\) is a
localized source term. In this experiment, the source is a Ricker wavelet
localized near the center of the spatial domain. The unknown wavespeed is parameterized through a latent Gaussian field. As in the Darcy flow example, we draw
\[
u \sim \mathcal N(0,C), \qquad C = \sigma^2(-\Delta + \tau^2 I)^{-\alpha},
\]
where \(-\Delta\) is the one-dimensional Laplacian on \((0,1)\) with homogeneous
Neumann boundary conditions. 
The covariance
operator is diagonal in the cosine basis. We exclude the constant mode so that the
latent field has zero spatial mean. In this experiment, we use
\begin{equation*}
\tau = 5,
\qquad
\alpha = 2,
\qquad
\sigma = 10.
\end{equation*}

The physical wavespeed $c$ is obtained from the latent field $u$ through a binary
level-set map
\begin{equation*}
c(x) =
\begin{cases}
c_{\rm high}, & u(x) > 0, \\
c_{\rm low}, & u(x) \le 0,
\end{cases}
\end{equation*}
where
\begin{equation*}
c_{\rm high} = \exp(0.27),
\qquad
c_{\rm low} = \exp(-0.27).
\end{equation*}
This construction produces piecewise-constant wave-speed profiles whose
interfaces are determined by the zero level set of \(u\). Thus, although the prior
is placed on a smooth latent Gaussian field, the physical coefficient entering the
PDE is discontinuous. The observations consist of first-arrival times recorded at a collection of
receiver locations \(x_1,\ldots,x_{N_r}\) \(\in (0,1)\). For each receiver, the arrival time is
defined as the first time at which the magnitude of the recorded signal exceeds a
prescribed fraction of its maximum amplitude. This defines a nonlinear observation
operator
\begin{equation*}
 \cG(u)
=
\bigl(
s_1(u),\ldots,s_{N_r}(u)
\bigr)
\in \mathbb R^{N_r},
\end{equation*}
where \(s_j(u)\) denotes the first-arrival time at receiver \(x_j\). The observed
data are generated according to \eqref{eq:GIPadd},
with additive Gaussian observational noise
\begin{equation*}
\eta \sim \mathcal N(0,\sigma_{\mathrm{obs}}^2 I),\quad \sigma_{\text{obs}}=5\times 10^{-3}.
\end{equation*}
The inverse problem is to infer the conditional distribution
\(\pi(\cdot \mid y)\) of the latent field \(u\), and hence of the induced wavespeed \(c(x)\), from noisy arrival-time observations.

To generate training data, we draw samples \(u^{(i)} \sim \mathcal N(0,C)\), map each
sample to a binary wavespeed \(c^{(i)}(x)\), solve the wave equation using a
finite-difference discretization, and extract first-arrival times at the receiver
locations. Additive Gaussian noise is then applied to the arrival times, producing
joint samples $(u^{(i)},y^{(i)})$ as in Data Assumption \ref{assumption:data}.
These samples are used to train the Cameron--Martin-informed transport map
\eqref{eq:T_CM} with an FNO-based architecture by minimizing the empirical objective \eqref{eq:JN}. The hyperparameters of the transport map are summarized in Table \ref{tab:exp_settings}. Once trained, the
map defines the approximate posterior \(T_\theta(\cdot \,;y)_\#\rho\) for any new
arrival-time observation \(y\).

Fig.~\ref{fig:exp3_spatiotemporal} shows the spatiotemporal evolution of the
wavefield \(p(x,t)\) for the realization corresponding to the first row of
Fig.~\ref{fig:exp3_posterior}. The vertical dashed lines indicate the receiver
locations, where first-arrival times are extracted and used as observations. We then evaluate the learned transport map on three realizations of the noisy
arrival-time observations $y^\dagger$. The results are shown in Fig.~\ref{fig:exp3_posterior}.
In each row, the left panel shows the noisy arrival-time observation, and the
right panel compares the posterior reconstruction of the wavespeed. The true wave
speed used to generate the data is shown in red, the learned pushforward posterior
is shown in green, and the pCN posterior is shown in blue. For each observation realization \(y^\dagger\), the same trained transport map is reused to generate approximate posterior samples, whereas the pCN reference posterior is obtained by running a separate Markov chain. 
The pCN reference posterior is obtained for each $y^\dagger$ by running an independent Markov chain for \(2.5\times 10^6\) steps, after discarding \(1\times 10^5\) samples as burn-in. The learned posterior agrees well with the pCN reference over most of the domain, with larger discrepancies occurring near sharp interfaces in the wavespeed.

We now compare the posterior distributions 
over the latent field \(u\) in the KL basis of the prior covariance. Fig.~\ref{fig:exp3_modes} shows the marginal distributions of
the first six KL coefficients for the observation instance corresponding to the first row of Fig.~\ref{fig:exp3_posterior}. Although the physical wavespeed is obtained through a discontinuous level-set map, the inference is performed on the  Gaussian field \(u\). In this latent representation, the learned pushforward posterior closely matches the pCN reference posterior in the dominant modes.

We also quantify the mode-by-mode discrepancy between the learned posterior and
the pCN reference. Since the prior is placed on the latent field \(u\), we compute
these errors in the KL basis of the prior covariance rather than directly in the
binary wave-speed variable \(c(x)\). For each mode, we compute the Wasserstein
distance between the one-dimensional marginal distribution of the learned
posterior and the corresponding marginal distribution of the pCN posterior. The results are
shown in Fig.~\ref{fig:exp3_wass}. 
We observe that the Cameron--Martin-informed map yields smaller errors than the baseline without
the \(C^{1/2}\) smoothing, with the improvement most visible in the higher modes.
In addition, the baseline map produces a nonzero projection onto the constant
mode, even though this mode is excluded from the prior and the latent field is
restricted to have zero spatial mean. This indicates that the
Cameron--Martin-informed parameterization not only improves agreement with the
pCN reference in the KL coordinates, but also better preserves the support structure of the prior.

\begin{figure}
    \centering
    \includegraphics[width=0.5\linewidth]{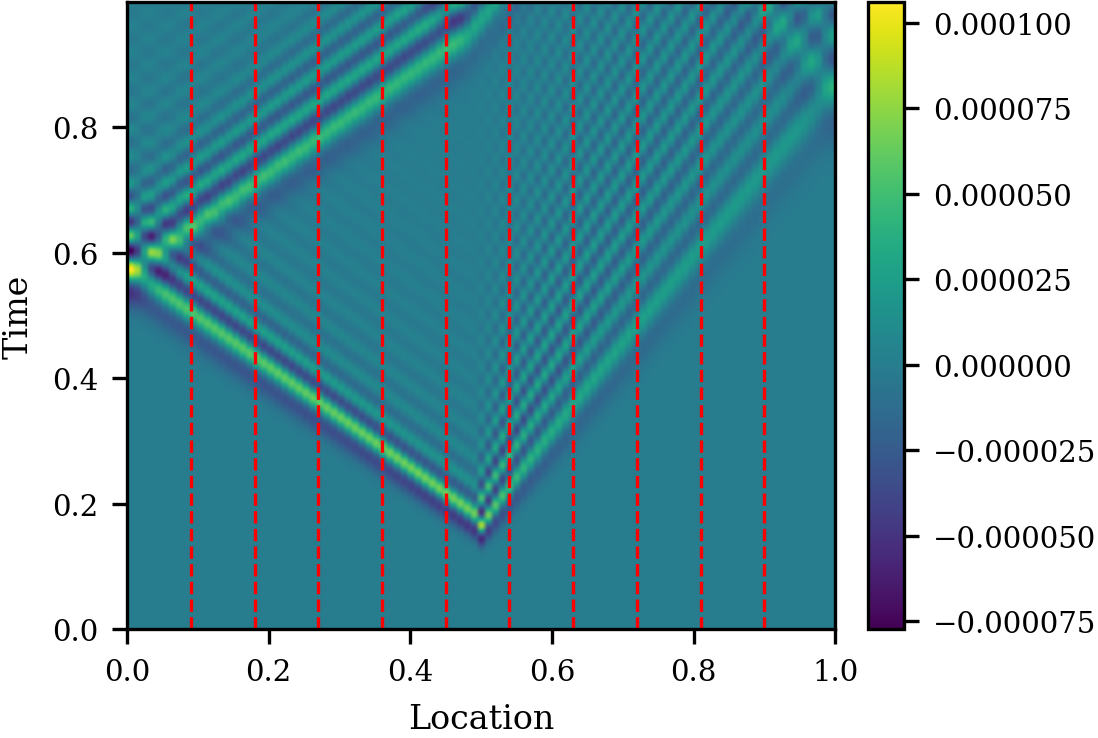}
    \caption{Wave equation inverse problem: Spatiotemporal evolution of the wavefield $p$. Vertical dashed lines denote the location of the receivers $x_j$ corresponding to the observations.}
    \label{fig:exp3_spatiotemporal}
\end{figure}

\begin{figure}
    \centering
    \includegraphics[width=\linewidth]{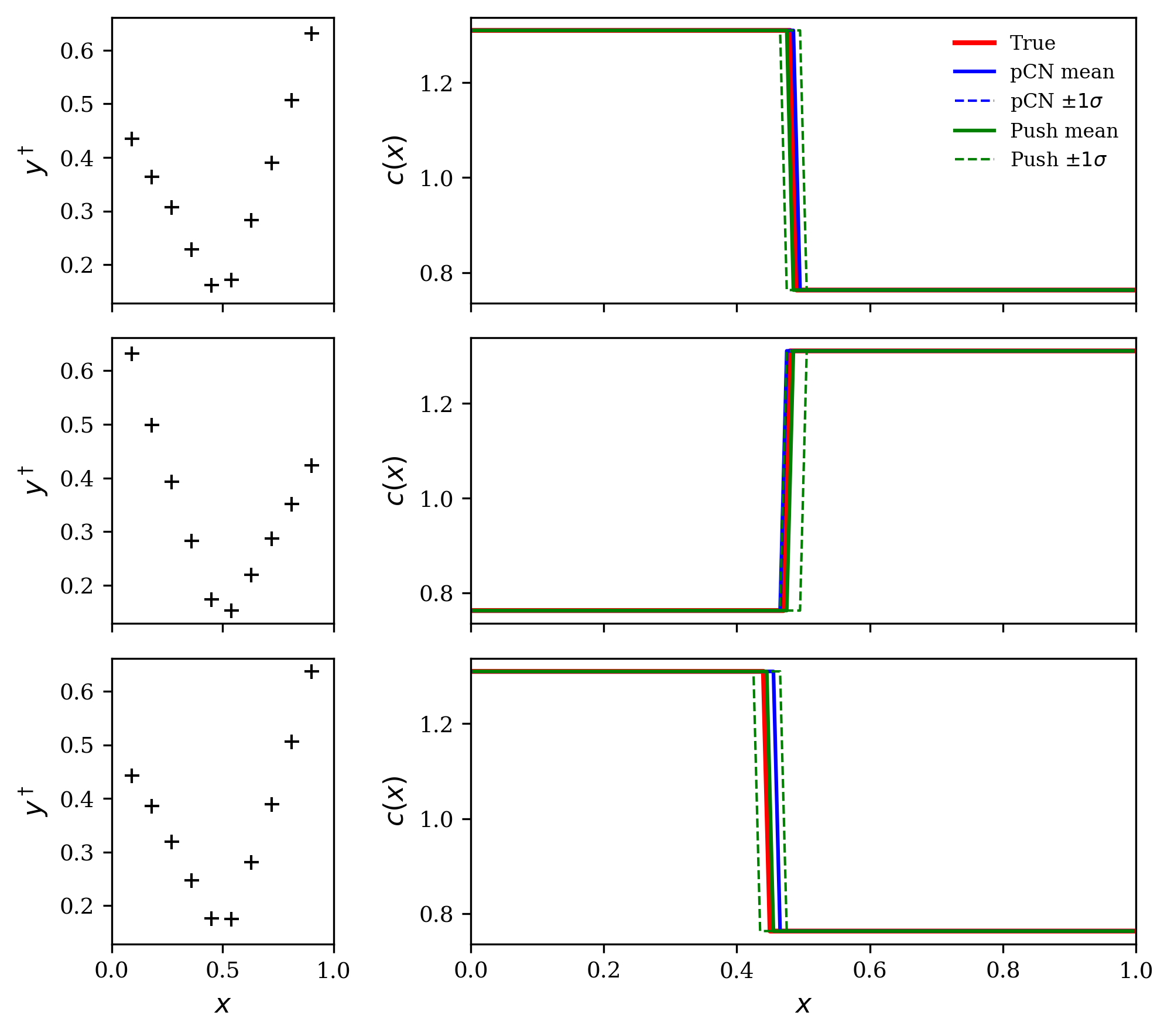}
    \caption{Wave equation inverse problem: Posterior reconstruction across multiple realizations of the
  noisy arrival-time observations $y^\dagger$. Each row corresponds to one realization. The learned pushforward approximation $T(\cdot \,; y^\dagger)_\# \rho$ is compared with an independently run pCN
  posterior. The true field is shown in red. The mean curves show the binary wave speed obtained by thresholding the posterior mean latent field; the dashed bands show the binary fields obtained by thresholding at the $\pm1\sigma$ contours of the latent field, indicating uncertainty in
   the interface location. The pushforward posterior is shown in green and the pCN posterior in blue.
  There is close agreement between the pushforward approximation and the pCN posterior in the mean response, however, the uncertainties differ near the interfaces. 
  }
    \label{fig:exp3_posterior}
\end{figure}

\begin{figure}
    \centering
    \includegraphics[width=\linewidth]{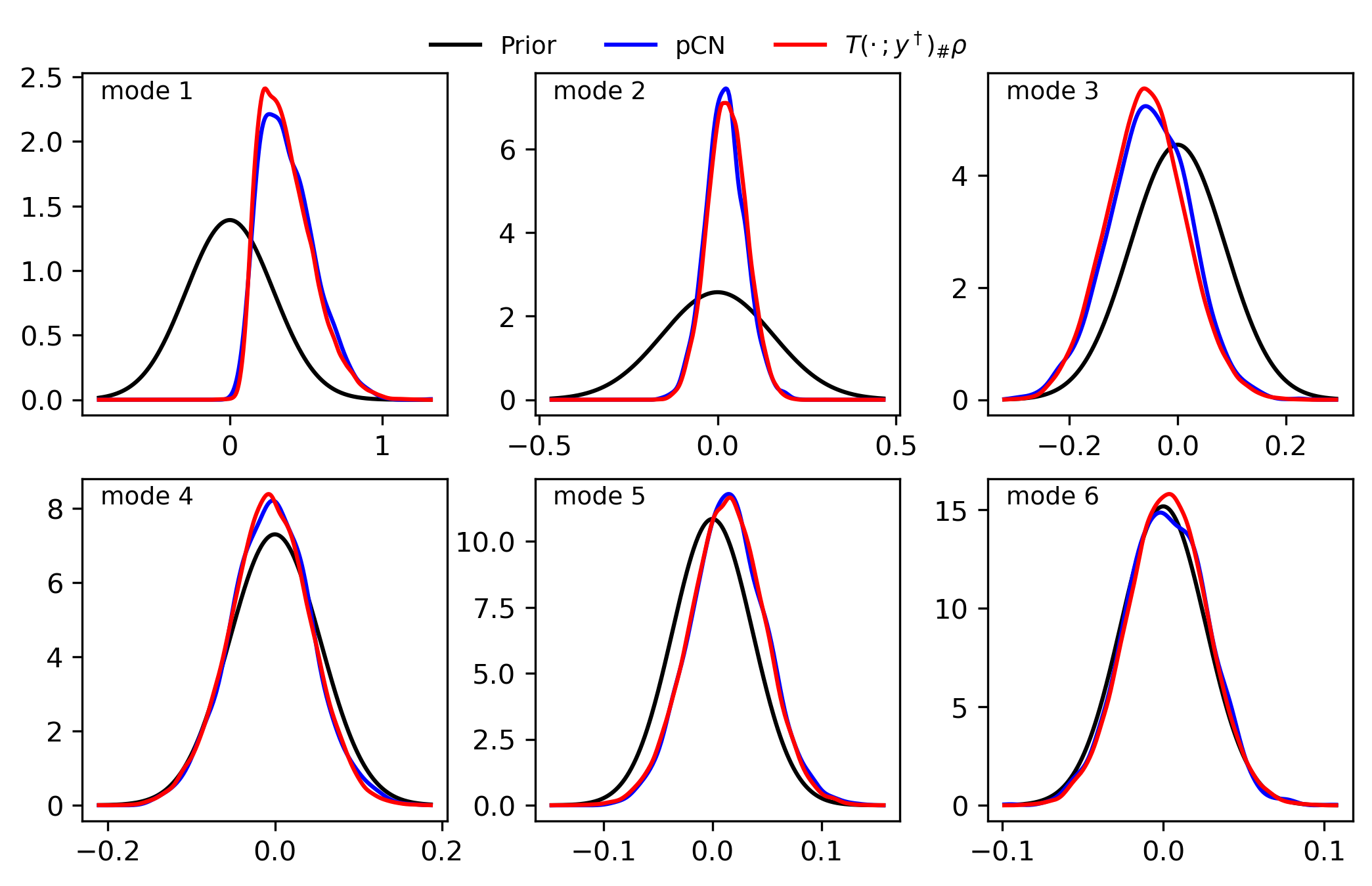}
    \caption{Wave equation inverse problem: Projection of the wave-equation posterior samples onto the first six KL modes. The pCN posterior is shown in blue, the pushforward posterior \(T(\cdot \,; y^\dagger)_\# \rho\) in red, and the prior in black. The close agreement between pCN and the pushforward approximation indicates that the learned map captures the dominant posterior structure.}
    \label{fig:exp3_modes}
\end{figure}

\begin{figure}
    \centering
    \includegraphics[width=0.8\linewidth]{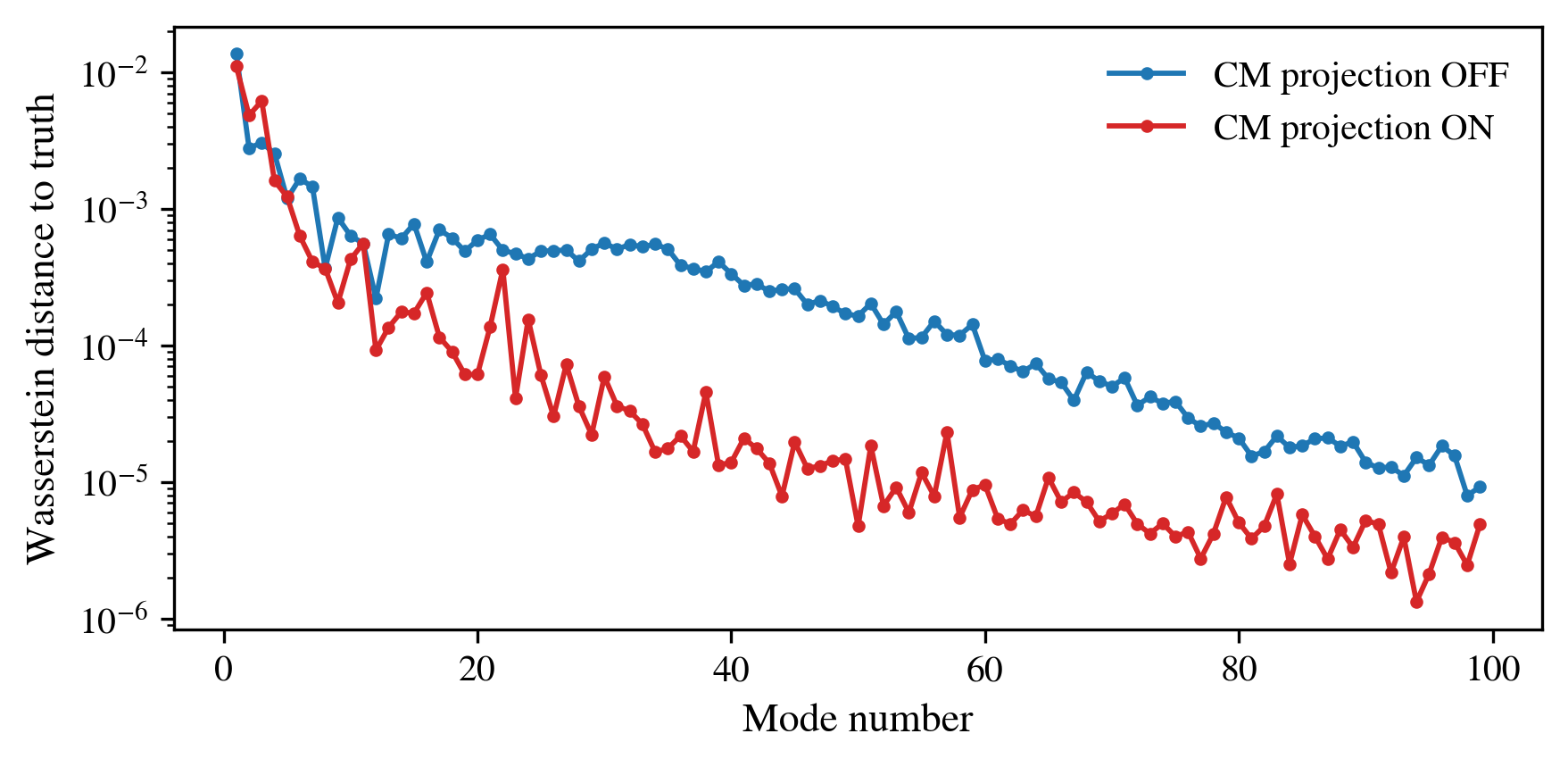}
    \caption{Wave equation inverse problem: Per-mode Wasserstein error for the wave-equation inverse problem, computed in
    the KL basis of the latent Gaussian prior. The Cameron--Martin-informed map gives smaller errors than the baseline without \(C^{1/2}\), especially in higher modes. }
    \label{fig:exp3_wass}
\end{figure}

\section{Conclusion}
\label{sec:conclusion}
We have proposed an amortized transport-based approach for Bayesian inverse
problems; furthermore, we have proposed specific adaptations of the
approach that are relevant when the unknown parameter belongs to an 
infinite-dimensional function space. 
The method learns a conditional transport map that pushes forward
a reference distribution to approximate the posterior distribution
corresponding to a given observation. The map is trained by minimizing 
an averaged, over observations, energy-distance objective, which can be expressed using samples from the joint distribution of parameters and observations. 
This formulation avoids the need to
evaluate likelihoods, construct inverse maps, or compute Jacobian determinants,
and therefore permits flexible neural-operator parameterizations.

A central feature of the proposed construction is the use of
Cameron--Martin-informed transport maps. In the infinite-dimensional setting,
posterior measures arising in Bayesian inverse problems are absolutely continuous
with respect to the prior under standard assumptions. The learned approximation
should therefore preserve this measure-theoretic structure. By parameterizing the
transport map as the identity plus a perturbation in the Cameron--Martin space,
implemented through the action of \(C^{1/2}\), the proposed map is designed to
respect the support and absolute-continuity properties of the prior. The numerical
experiments show that this structure improves agreement with pCN reference
posteriors, particularly in higher KL modes, and prevents the learned map from
introducing components outside the support of the prior, such as the excluded
constant mode in the PDE examples.

The numerical results demonstrate the proposed methodology in application to
a finite-dimensional
nonlinear inverse problem and on two PDE-constrained inverse problems: a Darcy
flow inverse problem and a wave-equation inverse problem. In the finite-dimensional
example, the learned pushforward captures multimodal posterior structure. In the
PDE examples, the learned transport map produces posterior samples that agree well
with pCN reference posteriors in physical space and in the dominant KL modes of
the prior. These results indicate that the proposed energy-distance-based
training objective, together with the Cameron--Martin-informed parameterization,
can provide accurate amortized posterior approximations for nonlinear inverse
problems involving function-valued unknowns.

The main computational benefit of the approach arises in settings where posterior
inference must be performed repeatedly for many different observations. The
training phase requires an upfront cost: one must generate joint samples, solve
the forward model many times, and optimize the parameters of the transport map.
However, this cost is paid only once. After training, posterior samples for a new
observation \(y^\dagger\) are obtained by drawing samples from the prior and
evaluating the learned map \(T_\theta(\cdot \,;y^\dagger)\). This online sampling
procedure is much cheaper than running a new MCMC chain, which would require many
additional forward solves for each new observation. Thus, the amortized approach
is particularly attractive in applications involving repeated inference, such as
uncertainty quantification across many data realizations, real-time or near
real-time inverse problems, and simulation-based experimental design.

\appendix

\section{Gaussian Measure on Hilbert Space} 
\label{sec:preliminaries}
The goal of this section is to introduce the measure-theoretic background needed to formulate transport maps for infinite-dimensional Bayesian inverse problems, with particular emphasis on Gaussian measures, Cameron--Martin spaces, and absolute continuity with respect to the prior. Let \( \mathcal U \) be a separable Hilbert space with inner product \( \langle \cdot, \cdot \rangle_{\mathcal U} \). A \( \mathcal U \)-valued random variable \( u \) is called Gaussian if, for every \( g \in \mathcal U \), the scalar random variable \( \langle g, u \rangle_{\mathcal U} \) is Gaussian in \( \mathbb{R} \). If \( \mathbb{E}[u] = 0 \), we say that \( u \) is centered. The covariance operator \( C : \mathcal U \to \mathcal U \) is defined by  
\[
\langle g, C h \rangle_{\mathcal U}
=
\mathbb{E}\big[ \langle g, u \rangle_{\mathcal U} \, \langle h, u \rangle_{\mathcal U} \big],\quad \text{for all $g,h\in\mathcal U$}.
\]
 Then, \( C \) is a self-adjoint, positive semidefinite, and compact operator. 
By the spectral theorem, there exists an orthonormal basis \( \{e_k\}_{k\ge1} \) of \( \mathcal U \) 
and a sequence of nonnegative eigenvalues \( \{\lambda_k\}_{k\ge1} \) with \( \lambda_k \to 0 \) such that
\( C e_k = \lambda_k e_k \) for all $k$. For $u \in \mathcal{U}$ we write $u_k=\langle u,e_k \rangle_{\mathcal U}$, and similarly for other elements in $\mathcal{U}$. In this basis, a Gaussian random variable \( u \sim \mathcal N(0,C) \) admits the Karhunen--Lo\`{e}ve (KL) expansion \cite{dashti_bayesian_2017}
\[
u = \sum_{k=1}^\infty \sqrt{\lambda_k}\,\xi_k\,e_k,
\qquad \xi_k \sim \mathcal N(0,1)\ \text{i.i.d.}\,.
\]
We denote the law of \( u \) by \( \mathcal N(0, C) \).

Associated with \( \rho = \mathcal N(0, C) \) is a Hilbert space \( \mathcal H \subset \mathcal U \), called the Cameron--Martin space, defined by
\[
\mathcal H := \mathrm{Range}(C^{1/2}).
\]
Equivalently, since \( C^{1/2} \) is a bounded operator on \( \mathcal U \), the space  $\mathcal H$ consists exactly of elements of the form
\[
h = C^{1/2} v, \qquad v \in \mathcal U,
\]
and we may hence define the inner product
\[
\langle h_1, h_2 \rangle_{\mathcal H}
=
\langle C^{-1/2} h_1, \, C^{-1/2} h_2 \rangle_{\mathcal U}.
\]
In the eigenbasis of \( C \), the Cameron--Martin space can be written as
\[
\mathcal H
=
\left\{
h = \sum_{k=1}^\infty h_k e_k :
\sum_{k=1}^\infty \frac{h_k^2}{\lambda_k} < \infty
\right\},
\]
Given two elements in this space: 
\[
h=\sum_{k=1}^\infty h_k e_k,
\qquad
g=\sum_{k=1}^\infty g_k e_k,
\]
the Cameron--Martin inner product is
\[
\langle h,g\rangle_{\mathcal H}
=
\sum_{k=1}^\infty \frac{h_k g_k}{\lambda_k},
\]
and in particular
\[
\|h\|_{\mathcal H}^2
=
\sum_{k=1}^\infty \frac{h_k^2}{\lambda_k}.
\]

Since \( \lambda_k \to 0 \), the condition 
\[
\sum_{k=1}^\infty \frac{h_k^2}{\lambda_k} < \infty
\]
requires the coefficients \( h_k \) to decay faster than those of a typical element of \( \mathcal U \) for which $\{h_k\}_{k \in \mathbb{N}}$ need only be square summable. Indeed, \( \mathcal H \) is compactly embedded into \( \mathcal U \).
Consequently, elements of the Cameron--Martin space exhibit smoother behavior than typical realizations drawn from \( \mathcal N(0,C) \) and a draw \( u \sim \mathcal N(0,C) \) 
belongs to \( \mathcal H \) with probability zero. In the following lemma, we motivate the role of the Cameron--Martin space. Informally, the result shows that if $\rho_h$ is the translation of $\rho$ by $h$, then $\rho_h$ is absolutely continuous with respect to $\rho$ only if the shift $h$ lies in the Cameron--Martin space associated with $\rho$. The proof of the lemma can be found in \cite{bogachev_gaussian_1998}.

\begin{lemma}[Cameron--Martin Theorem]
\label{lemma:CM}
Let $\rho = \mathcal N(0,C)$ be a Gaussian measure on a separable Hilbert space $\mathcal U$. For $h \in \mathcal U$, let $\rho_h$ be the translated measure defined by
\[
\rho_h(A) = \rho(A - h), \qquad A \subset \mathcal U \ \text{Borel}.
\]
Then, $\rho_h$ is absolutely continuous with respect to $\rho$ if and only if $h \in \mathcal H$, where $\mathcal H$ is the Cameron--Martin space associated with $\rho$.
\end{lemma}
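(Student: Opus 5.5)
We prove the Cameron--Martin theorem by working in the Karhunen--Lo\`{e}ve coordinates, where both $\rho$ and $\rho_h$ become product measures. We may assume $\lambda_k>0$ for all $k$. If some $\lambda_k=0$, then $\rho$ is concentrated on $\{u_k=0\}$ while $\rho_h$ is concentrated on $\{u_k=h_k\}$. In that case $h_k\neq 0$ forces mutual singularity, consistent with $h\notin\mathcal H$. If instead $h_k=0$, that coordinate may simply be discarded.

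Under the isometry $u\mapsto (u_k/\sqrt{\lambda_k})_{k\ge1}$, the measure $\rho$ becomes $\bigotimes_k \mathcal N(0,1)$ and $\rho_h$ becomes $\bigotimes_k \mathcal N(a_k,1)$, with $a_k=h_k/\sqrt{\lambda_k}$. Absolute continuity is preserved under this measurable bijection onto its image, and $h\in\mathcal H$ is exactly the condition $\sum_k a_k^2<\infty$. The problem is therefore reduced to comparing two product Gaussian measures on $\mathbb R^{\mathbb N}$.

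\emph{Sufficiency.} Suppose $\sum_k a_k^2<\infty$. Define the finite-dimensional densities
\[
L_n(\xi)=\exp\Bigl(\sum_{k\le n} a_k\xi_k-\tfrac12\sum_{k\le n}a_k^2\Bigr).
\]
Under $\rho$ these form a positive martingale with $\mathbb E[L_n^2]=\exp\bigl(\sum_{k\le n}a_k^2\bigr)$. This bound is uniform in $n$, so $(L_n)$ is bounded in $L^2$. It therefore converges in $L^1$ to a limit $L$, namely $L=\exp\bigl(\sum_k a_k\xi_k-\tfrac12\sum_k a_k^2\bigr)$; the series $\sum_k a_k\xi_k$ converges almost surely and in $L^2$. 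For cylinder sets $A$ depending on finitely many coordinates, $\int_A L\,d\rho=\rho_h(A)$ by the finite-dimensional change of variables. A $\pi$--$\lambda$ argument then extends this identity to all Borel sets, which gives $\rho_h\ll\rho$ with density $L$.

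\emph{Necessity.} This is the main obstacle, since we must exhibit a separating set when $\sum_k a_k^2=\infty$. We use Kakutani's dichotomy. The Hellinger affinity of $\mathcal N(a_k,1)$ and $\mathcal N(0,1)$ is $\exp(-a_k^2/8)$, so the infinite product of affinities is $\exp\bigl(-\tfrac18\sum_k a_k^2\bigr)=0$. Kakutani's theorem then yields mutual singularity, so $\rho_h\not\ll\rho$.

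If we prefer a self-contained argument, we build the separating set directly. Choose $n_j$ with $s_j:=\sum_{k\le n_j}a_k^2\ge 4^j$, and set $X_j=s_j^{-1}\sum_{k\le n_j}a_k u_k/\sqrt{\lambda_k}$. Under $\rho$ this random variable has mean $0$ and variance $1/s_j$, while under $\rho_h$ it has mean $1$ and the same variance. Chebyshev's inequality together with Borel--Cantelli shows that $X_j\to0$ $\rho$-almost surely and $X_j\to1$ $\rho_h$-almost surely. The set $\{X_j\to0\}$ therefore has full $\rho$-measure and zero $\rho_h$-measure, which completes the proof.
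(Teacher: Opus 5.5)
Your argument is correct. It differs from the paper only in that the paper gives no proof of this lemma and simply cites Bogachev's monograph.

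\textbf{Your route.} You give a self-contained proof tailored to the separable Hilbert setting. In the Karhunen--Lo\`{e}ve coordinates $\xi_k=u_k/\sqrt{\lambda_k}$, the two measures become $\bigotimes_k\mathcal N(0,1)$ and $\bigotimes_k\mathcal N(a_k,1)$. Sufficiency then follows from the $L^2$-bounded martingale $L_n$ plus a $\pi$--$\lambda$ argument. Necessity follows from Kakutani's dichotomy or from your explicit functionals $X_j$.

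\textbf{What it buys.} You get more than the lemma asks for:
the explicit density $\frac{d\rho_h}{d\rho}(u)=\exp\bigl(\sum_k h_ku_k/\lambda_k-\frac{1}{2}\|h\|_{\mathcal H}^2\bigr)$, with the series understood as an $L^2(\rho)$ limit;
strict positivity of that density, so $\rho_h$ and $\rho$ are in fact equivalent;
and mutual singularity, not merely failure of absolute continuity, when $h\notin\mathcal H$.
Your handling of zero eigenvalues is also consistent with the definition $\mathcal H=\mathrm{Range}(C^{1/2})$.

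\textbf{The trade-off.} Your proof relies entirely on the spectral decomposition of $C$. The cited reference treats Radon Gaussian measures on general locally convex (in particular Banach) spaces. There no eigenbasis is available and $\mathcal H$ must be characterized intrinsically through $\rho$. That is the generality the paper's Banach-space framing would need. For the Hilbert-space statement given here, your proof is sufficient.

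\textbf{One small presentational correction.} The map $u\mapsto(u_k/\sqrt{\lambda_k})_k$ is an isometry only onto the weighted sequence space with norm $(\sum_k\lambda_k\xi_k^2)^{1/2}$. What you actually use is that it is an injective Borel map whose coordinate functionals generate the Borel $\sigma$-algebra of the separable space $\mathcal U$. That fact is precisely what makes $\rho_h\ll\rho$ equivalent to absolute continuity of the pushforward measures.
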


Lemma~\ref{lemma:CM} identifies the Cameron--Martin space as the class of shifts that preserve absolute continuity with respect to the Gaussian prior; Lemma~\ref{lem:posterior_ac} then recalls that the Bayesian posterior itself is absolutely continuous with respect to the prior, motivating transport maps whose perturbations lie in the Cameron--Martin space.

\begin{lemma}[Posterior is absolutely continuous with respect to the prior]
\label{lem:posterior_ac}
Let $\rho \in \mathcal P(\mathcal U)$ be the prior, and let $\ell(y^\dagger \mid u)$ be the likelihood for a fixed observation $y^\dagger \in \mathcal Y$. Assume that
\[
Z(y^\dagger)
:=
\int_{\mathcal U} \ell(y^\dagger \mid u)\,\rho(du)
\in (0,\infty).
\]
Then the posterior measure $\pi(\cdot \mid y^\dagger)$ is absolutely continuous with respect to $\rho$, and its Radon-Nikodym derivative is
\[
\frac{d\pi(\cdot \mid y^\dagger)}{d\rho}(u)
=
\frac{\ell(y^\dagger \mid u)}{Z(y^\dagger)}.
\]
In particular, for every Borel set $A \subset \mathcal U$,
\[
\pi(A \mid y^\dagger)
=
\frac{\int_A \ell(y^\dagger \mid u)\,\rho(du)}
{\int_{\mathcal U} \ell(y^\dagger \mid u)\,\rho(du)}.
\]
\end{lemma}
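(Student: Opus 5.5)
The plan is to check directly that the set function
\[
\nu(A) := \frac{1}{Z(y^\dagger)}\int_A \ell(y^\dagger \mid u)\,\rho(du), \qquad A \subset \mathcal U \ \text{Borel},
\]
is a probability measure. I would then identify it with the posterior $\pi(\cdot\mid y^\dagger)$ defined by Bayes' rule. Once $\pi(\cdot\mid y^\dagger)=\nu$, both absolute continuity and the form of the Radon--Nikodym derivative follow at once.

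\textbf{Step 1: $\nu$ is a probability measure.} The likelihood $\ell(y^\dagger\mid\cdot)$ is a nonnegative measurable function on $\mathcal U$. Since $Z(y^\dagger)\in(0,\infty)$, the normalized function $\ell(y^\dagger\mid\cdot)/Z(y^\dagger)$ is a nonnegative element of $L^1(\rho)$ with integral one. Countable additivity of $\nu$ then follows from the monotone convergence theorem applied to indicator functions of disjoint unions, and $\nu(\mathcal U)=1$ holds by the choice of normalization.

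\textbf{Step 2: absolute continuity.} If $\rho(A)=0$, then $\int_A \ell\,d\rho=0$, because the integral of a nonnegative function over a null set vanishes. Hence $\nu\ll\rho$. Since $\rho$ is a probability measure, hence $\sigma$-finite, the Radon--Nikodym derivative is unique $\rho$-a.e. By construction it equals $\ell(y^\dagger\mid u)/Z(y^\dagger)$.

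\textbf{Step 3: identification with the posterior (main obstacle).} This is the only substantive point, because in infinite dimensions there is no Lebesgue density, so Bayes' rule must be understood measure-theoretically. I would argue as follows. The likelihood $\ell(y\mid u)$ is the density of the conditional law of $y$ given $u$ with respect to a fixed reference measure $\lambda$ on $\mathcal Y$; for additive Gaussian noise in finite-dimensional $\mathcal Y$, $\lambda$ is Lebesgue measure. Then the joint law satisfies $\gamma(du,dy)=\ell(y\mid u)\,\rho(du)\,\lambda(dy)$. Applying Fubini, the $y$-marginal $\kappa$ has $\lambda$-density $Z(y)$. Consequently $\gamma(du,dy)=\nu_y(du)\,\kappa(dy)$ with $\nu_y$ as above, wherever $Z(y)\in(0,\infty)$. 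By uniqueness of disintegration, $\nu_y=\pi(\cdot\mid y)$ for $\kappa$-a.e.\ $y$. Taking $\pi(\cdot\mid y^\dagger):=\nu_{y^\dagger}$ as the version of the posterior at the given observation gives the stated formula for every Borel $A$. I expect stating this identification cleanly, in particular the convention that the posterior is defined pointwise in $y^\dagger$ by this version, to be the step needing care. The computations in Steps 1 and 2 are routine.
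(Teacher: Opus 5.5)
Your proposal is correct and is essentially the argument behind the paper's proof, which is only a citation to Dashti--Stuart, Theorem 14. That theorem writes the joint law as having density $\ell(y\mid u)$ with respect to a product of the prior and a reference measure on $\mathcal Y$ and then disintegrates, and your Tonelli-plus-uniqueness-of-disintegration step is exactly that conditioning argument in the product case. Your caveat that the formula at a fixed $y^\dagger$ amounts to choosing a version is the right one: the cited result holds for $\kappa$-a.e.\ $y$, and $Z(y)\in(0,\infty)$ holds $\kappa$-a.e.\ automatically since $\kappa(dy)=Z(y)\,\lambda(dy)$.
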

For the proof of this lemma, see \cite[Theorem 14]{dashti_bayesian_2017}.

\section{Proof of Lemma \ref{lemma:objective_e_updated}}
\label{app:proof_objective_e}

\begin{proof}
We begin from the definition of $\LossL(\theta)$:
 \[
\begin{aligned}
\LossL(\theta)
&=\mathbb{E}^{y \sim \kappa}\left[
\DE^2\bigl(\pi(\cdot\mid y),B_\theta(\cdot \,;y)\bigr)
\right]\\
&=\mathbb{E}^{y \sim \kappa}\Big[
2\, \mathbb{E}^{(u,u') \sim \pi(\cdot\mid y) \otimes B_\theta(\cdot \,;y)}
\bigl[\|u - u'\|\bigr]\\
&\qquad\qquad
- \mathbb{E}^{(u,u') \sim \pi(\cdot\mid y) \otimes \pi(\cdot\mid y)}
\bigl[\|u - u'\|\bigr]\\
&\qquad\qquad
- \mathbb{E}^{(u,u') \sim B_\theta(\cdot \,;y) \otimes B_\theta(\cdot \,;y)}
\bigl[\|u - u'\|\bigr]
\Big].
\end{aligned}
\]
For clarity, we write the objective in integral form:
\[
\begin{aligned}  
\LossL(\theta)
&= 2\int_{\mathcal Y}\!\int_{\mathcal U}\!\int_{\mathcal U}
\|u - u'\|\,
\pi(du \mid y')\,
B_\theta(du';y')\,
\kappa(dy') \\
&\qquad - \int_{\mathcal Y}\!\int_{\mathcal U}\!\int_{\mathcal U}
\|u - u'\|\,
\pi(du \mid y')\,
\pi(du' \mid y')\,
\kappa(dy') \\
&\qquad - \int_{\mathcal Y}\!\int_{\mathcal U}\!\int_{\mathcal U}
\|u - u'\|\,
B_\theta(du;y')\,
B_\theta(du';y')\,
\kappa(dy').
\end{aligned}
\]

We simplify the three terms of the objective function separately. The first term can be written as:
\[
2\int_{\mathcal Y}\!\int_{\mathcal U}\!\int_{\mathcal U}
\|u - u'\|\,
B_\theta(du;y')\,
\gamma(du', dy').
\]
Using the pushforward representation
\[
B_\theta(d\z;y') = T_\theta(\cdot \,; y')_\#\muref(d\z),
\]
this becomes
\[
2\int_{\mathcal Y}\!\int_{\mathcal U}\!\int_{\mathcal U}
\|T_\theta(z; y') - u'\|\,
\muref(dz)\,
\gamma(du', dy')\,,
\]
which is
\[
2\mathbb{E}^{\bigl(z, (u^\prime, y^\prime)\bigr)\sim \bigl(\muref \otimes \gamma \bigr)} \| T_\theta(z; y^\prime) - u^\prime\|.
\]

The second term in $\LossL(\theta)$ does not depend on $\theta$. The third term in $\LossL(\theta)$ can be rewritten using the same pushforward representation:
\[
-\int_{\mathcal Y} \int_{\mathcal U} 
\int_{\mathcal U} 
\|T_\theta(z; y'') - T_\theta(z'; y'')\|\,
\muref(dz)\,
\muref(dz^\prime)\,
\kappa(dy''),
\]
which is
\[
- \mathbb{E}^{\bigl(z,z',y''\bigr)\sim \bigl(\muref \otimes \muref \otimes \kappa \bigr)}\| T_\theta(z; y'')-T_\theta(z'; y'') \|
\]

 Since the second term is constant with respect to $\theta$, minimizing $\LossL(\theta)$ is equivalent to minimizing the functional in Eq.~\eqref{eq:J(theta)}.

\end{proof}

\section{Proof of Lemma \ref{lem:ES}}
\label{app:proof_ES}

\begin{proof}
We begin by expanding the definition of the expected squared energy distance into three terms:
\begin{align*}
\mathbb{E}^{y \sim \kappa}
\Big[
\DE^2\bigl(\pi(\cdot \mid y),B_\theta(\cdot \,; y)\bigr)
\Big]
&=
2\int_{\mathcal Y}
\int_{\mathcal U}
\int_{\mathcal U}
\|u-v\|\,
\pi(du\mid y)\,
B_\theta(dv;y)\,
\kappa(dy)
\\
&\qquad
-
\int_{\mathcal Y}
\int_{\mathcal U}
\int_{\mathcal U}
\|u-u'\|\,
\pi(du\mid y)\,
\pi(du'\mid y)\,
\kappa(dy)
\\
&\qquad
-
\int_{\mathcal Y}
\int_{\mathcal U}
\int_{\mathcal U}
\|v-v'\|\,
B_\theta(dv;y)\,
B_\theta(dv';y)\,
\kappa(dy).
\end{align*}
The second term depends only on the true posterior distribution and is independent of
$\theta$. We denote it by
\[
\mathsf{const}
=
\mathbb{E}^{y\sim\kappa}
\Big[
\mathbb{E}^{(u,u')\sim \pi(\cdot\mid y)\otimes \pi(\cdot\mid y)}
\|u-u'\|
\Big].
\]

We now simplify the first and third terms. Using the disintegration identity
\[
\gamma(du,dy)=\pi(du\mid y)\kappa(dy),
\]
the first term can be rewritten as
\begin{align*}
&2\int_{\mathcal Y}
\int_{\mathcal U}
\int_{\mathcal U}
\|u-v\|\,
\pi(du\mid y)\,
B_\theta(dv;y)\,
\kappa(dy)
\\
&
\qquad\qquad =2\int_{\mathcal U\times \mathcal Y}
\int_{\mathcal U}
\|u-v\|\,
B_\theta(dv;y)\,
\gamma(du,dy)
\\
&
\qquad\qquad
=2\,\mathbb{E}^{(u,y)\sim\gamma}
\Big[
\mathbb{E}^{v\sim B_\theta(\cdot \,;y)}
\|u-v\|
\Big].
\end{align*}

Similarly, since the third term depends on \(y\) but not on the outer sample \(u\),
we may insert an expectation over \(u\sim \pi(\cdot\mid y)\) without changing its
value. Equivalently, using again
\(\gamma(du,dy)=\pi(du\mid y)\kappa(dy)\), we obtain
\begin{align*}
&\int_{\mathcal Y}
\int_{\mathcal U}
\int_{\mathcal U}
\|v-v'\|\,
B_\theta(dv;y)\,
B_\theta(dv';y)\,
\kappa(dy)
\\
&\qquad\qquad=
\int_{\mathcal Y}
\int_{\mathcal U}
\int_{\mathcal U}
\int_{\mathcal U}
\|v-v'\|\,
\pi(du\mid y)\,
B_\theta(dv;y)\,
B_\theta(dv';y)\,
\kappa(dy)
\\
&\qquad\qquad=
\int_{\mathcal U\times \mathcal Y}
\int_{\mathcal U}
\int_{\mathcal U}
\|v-v'\|\,
B_\theta(dv;y)\,
B_\theta(dv';y)\,
\gamma(du,dy)
\\
&\qquad\qquad=
\mathbb{E}^{(u,y)\sim\gamma}
\Big[
\mathbb{E}^{(v,v')\sim B_\theta(\cdot \,;y)\otimes B_\theta(\cdot \,;y)}
\|v-v'\|
\Big].
\end{align*}

Substituting these identities into the expansion of the expected squared energy
distance gives
\begin{align*}
&\mathbb{E}^{y \sim \kappa}
\Big[
\DE^2\bigl(\pi(\cdot \mid y),B_\theta(\cdot \,; y)\bigr)
\Big]
\\
&\qquad\qquad=
2\,\mathbb{E}^{(u,y)\sim\gamma}
\Big[
\mathbb{E}^{v\sim B_\theta(\cdot \,;y)}
\|u-v\|
\Big]
\\
&\qquad\qquad\qquad\qquad
-
\mathbb{E}^{(u,y)\sim\gamma}
\Big[
\mathbb{E}^{(v,v')\sim B_\theta(\cdot \,;y)\otimes B_\theta(\cdot \,;y)}
\|v-v'\|
\Big]
-\mathsf{const}.
\end{align*}
We can now group the first two terms inside the same outer expectation:
\begin{align*}
&\mathbb{E}^{y \sim \kappa}
\Big[
\DE^2\bigl(\pi(\cdot \mid y),B_\theta(\cdot \,; y)\bigr)
\Big]
\\
&=
2\,\mathbb{E}^{(u,y)\sim\gamma}
\Bigg[
\mathbb{E}^{v\sim B_\theta(\cdot \,;y)}
\|u-v\|
-
\frac12
\mathbb{E}^{(v,v')\sim B_\theta(\cdot \,;y)\otimes B_\theta(\cdot \,;y)}
\|v-v'\|
\Bigg]
-\mathsf{const}.
\end{align*}
By the definition of the energy score 
$
\ES(\nu,u)
=
\mathbb{E}^{v\sim\nu}\|u-v\|
-
\frac12
\mathbb{E}^{(v,v')\sim\nu\otimes\nu}\|v-v'\|,$
and taking \(\nu=B_\theta(\cdot \,;y)\), we have
\[
\ES\bigl(B_\theta(\cdot \,;y),u\bigr)
=
\mathbb{E}^{v\sim B_\theta(\cdot \,;y)}
\|u-v\|
-
\frac12
\mathbb{E}^{(v,v')\sim B_\theta(\cdot \,;y)\otimes B_\theta(\cdot \,;y)}
\|v-v'\|.
\]
Thus,
\[
\mathbb{E}^{y \sim \kappa}
\Big[
\DE^2\bigl(\pi(\cdot \mid y),B_\theta(\cdot \,; y)\bigr)
\Big]
=
2\,\mathbb{E}^{(u,y)\sim\gamma}
\Big[
\ES\bigl(B_\theta(\cdot \,;y),u\bigr)
\Big]
-\mathsf{const}.
\]
Since \(\mathsf{const}\) is independent of \(\theta\), minimizing the expected energy score is
equivalent to minimizing the objective defined in Eq.~\eqref{eq:objective_e}.
\end{proof}

\section{Additional Details on the Numerical Experiments} 
\label{sec:numerical_details}
In this section, we include the details of the numerical experiments and investigate how the accuracy of the learned transport map scales with the size of the training dataset. Table~\ref{tab:exp_settings} presents the details for the model architecture and training for all three experiments. For the scaling study, we
focus on Experiment 1, which is presented in section \ref{sec:exp1} because, in this case, the true posterior distribution is
tractable and can be accurately approximated by quadrature.


\begin{table}[!htb]
{\footnotesize
\caption{Training and model settings for the numerical experiments.}
\label{tab:exp_settings}
\begin{center}
\begin{tabular}{|c|c|c|c|}
\hline
 & Experiment 1 & Experiment 2 & Experiment 3 \\
\hline
Neural network & MLP & FNO & FNO \\\hline
 Data size& $6\times10^5$& $1\times10^6$ & $1.5\times10^6$\\\hline

Network depth & 8& 5 & 5 \\
\hline
Width & 150& 80 & 100 \\
\hline
Modes (FNO) &--&32&32\\
\hline
Batch size & 1000& 150 & 80 \\
\hline
Epochs & 60& 60 & 15 \\
\hline
Trigger time &-- &20&--\\
\hline
Optimizer & Adam& Adam&Adam \\
\hline
Learning rate &$1\times10^{-3}$ &$1\times10^{-3}$ &$1\times10^{-3}$\\

\hline 

\end{tabular}
\end{center}
}
\end{table}

To measure the accuracy for the scaling study, we compute the expected squared energy
distance between the learned pushforward distribution and the true posterior
distribution, averaged over observations \(y^\dagger \sim \kappa\):
\[
\mathcal E
=
\mathbb E_{y^\dagger \sim \kappa}
\left[
\DE^2\bigl(\pi(\cdot \mid y^\dagger),
B_\theta(\cdot \,; y)\bigr)
\right].
\]
In the main text, we use the prior \(\rho\) as the reference measure and learn a
map of the form
\[
T_\theta(\cdot \,;y)_\# \rho \approx \pi(\cdot \mid y).
\]
In this appendix, we also compare with an alternative construction in which the
reference samples are drawn from the joint distribution \(\gamma\). In this case,
the map takes both the reference pair and the target observation as input and is
trained to push forward samples from the joint distribution toward the posterior
corresponding to \(y^\dagger\). Thus, the scaling study compares two transport
formulations: one using the prior as the reference measure and one using the joint
distribution as the reference measure.

To investigate the scaling with data size, we fix the neural network architecture and vary the number of training samples \(N\). Fig.~\ref{fig:scaling_data} shows the error as a function of \(N\) for several model architectures and for both choices of
reference measure. In both cases, the error decreases approximately as a power law
in the number of training samples, with behavior at least consistent with $\mathcal{O}(N^{-1/2})$.
This is the typical statistical convergence rate associated with Monte
Carlo-type estimators and empirical risk minimization. The observed scaling
therefore suggests that, for the specific fixed model architecture considered here and for the range of $N$ studied here, the dominant error is statistical error from finite training data of size $N.$
\begin{figure}[!htb]
    \centering
    \includegraphics[width=.5\linewidth]{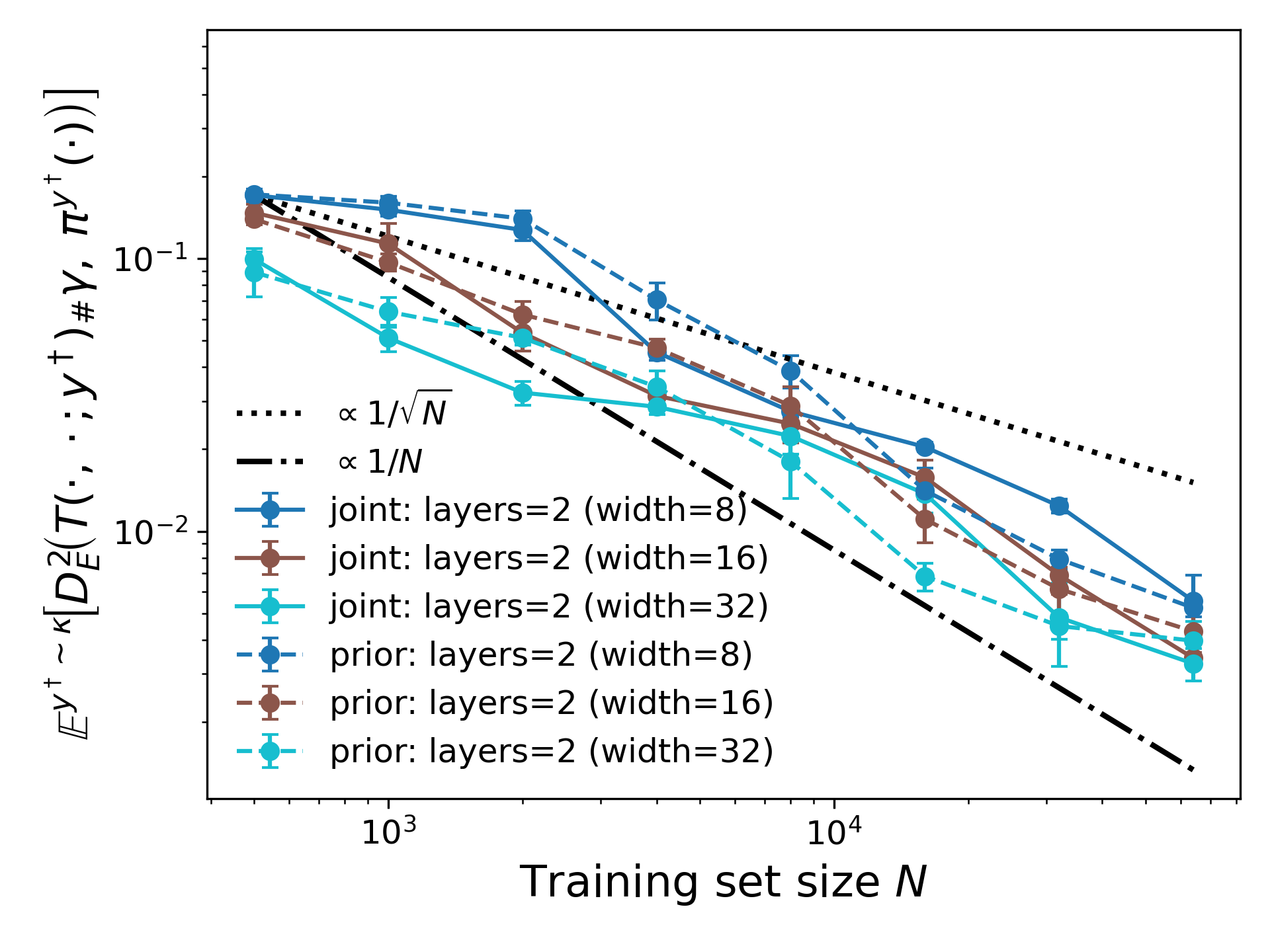}
    \caption{
    Scaling of the validation energy-distance error with the number of training samples \(N\).
    Results are shown for several neural network architectures. The error decreases at least at the Monte Carlo rate \(N^{-1/2}\).
    }
    \label{fig:scaling_data}
\end{figure}

\section*{Reproducibility of computational results}
Code and data that allow readers to reproduce the results in this paper are available at \url{https://github.com/hojjatks/Amortized_energybased_inversion}.
\section*{Acknowledgments}

RB is supported by the NSERC Discovery Grant program. AMS is supported by a Department of Defense (DoD) Vannevar Bush Faculty Fellowship (award N00014-22-1-2790), which also supports HK.

\section*{Author Contributions}
The authors in this paper are listed in alphabetical order. 
\textbf{Ricardo Baptista}: Conceptualization, Methodology, Formal Analysis, and
Writing--Review \& Editing.
\textbf{Hojjat Kaveh}: Conceptualization, Methodology, Software, Investigation, Validation, Formal Analysis, Visualization, Writing--Original Draft, Writing--Review \& Editing.
\textbf{Andrew Stuart}: Conceptualization, Methodology,
Formal Analysis, Supervision, and Writing--Review \& Editing. 

\bibliographystyle{siamplain}
\bibliography{ref_new}
\end{document}